\theoremstyle{plain}
\newtheorem{The}{Theorem}[section]  
\newtheorem{lem}{Lemma}[section]
\newtheorem{cor}{Corollary}[section]
\newtheorem{prop}{Proposition}[section]
\theoremstyle{definition}
\newtheorem{Def}{Definition}[section] %
\theoremstyle{remark}
\newtheorem{remark}{Remark}[section]
\newcommand{\abs}[1]{\left|#1\right|}
\newcommand{\eps}{\varepsilon}
\newcommand{\supp}{\mathrm{supp}}
\newcommand{\calA}{\mathcal{A}}
\newcommand{\calB}{\mathcal{B}}
\newcommand{\calF}{\mathcal{F}}
\newcommand{\calI}{\mathcal{I}}
\newcommand{\calL}{\mathcal{L}}
\newcommand{\calO}{\mathcal{O}}
\newcommand{\calS}{\mathcal{S}}
\newcommand{\C}{\mathbb{C}}
\newcommand{\N}{\mathbb{N}}
\newcommand{\R}{\mathbb{R}}
\newcommand{\bfun}{\varphi}
\newcommand{\sgn}{\text{sgn}}
\newcommand{\cp}{\mathrm{cp}}
\newcommand{\productspace}[2]{\bigotimes^{#1}\mathbb{C}^{#2}}
\newcommand{\ba}{\boldsymbol{a}}
\newcommand{\bb}{\boldsymbol{b}}
\newcommand{\rr}{\boldsymbol{r}}
\newcommand{\bell}{\boldsymbol{\ell}}
\newcommand{\bx}{\boldsymbol{x}}
\newcommand{\bX}{\mathbf{X}}
\newcommand{\bY}{\mathbf{Y}}
\newcommand{\bE}{\mathbf{E}}
\newcommand{\bB}{\mathbf{B}}
\newcommand{\bu}{\boldsymbol{u}}
\newcommand{\bk}{\boldsymbol{k}}
\newcommand{\be}{\boldsymbol{e}}
\newcommand{\btheta}{\boldsymbol{\theta}}
\newcommand{\scrH}{\mathscr{H}}
\newcommand{\scrT}{\mathscr{T}}
\newcommand{\scrA}{\mathscr{A}}
\newcommand{\lrbrace}[1]{\left\{#1\right\}}
\newcommand{\lrbracket}[1]{\left(#1\right)}
\newcommand{\lrsquare}[1]{\left[#1\right]}
\newcommand{\inner}[1]{\left\langle#1\right\rangle}
\newcommand{\norm}[1]{\left\Vert#1\right\Vert}
\newcommand{\rank}{\mathrm{rank}}
\definecolor{lightblue}{rgb}{0.957,0.963,0.975}
\definecolor{midblue}{rgb}{0.937,0.943,0.965}
\definecolor{deepblue}{rgb}{0.325,0.427,0.569}
\definecolor{blocktitleblue}{rgb}{0.225,0.427,0.669}
\definecolor{lightred}{rgb}{0.996,0.969,0.969}
\definecolor{midred}{rgb}{0.976,0.949,0.949}
\definecolor{deepred}{rgb}{0.686,0.133,0.098}
\definecolor{deepgreen}{rgb}{0,0.5,0}
\definecolor{halfgray}{gray}{0.55}
\newcommand{\blue}[1]{\textcolor{blue}{#1}}
\definecolor{lightpurple}{rgb}{0.978,0.978,1.0}
\definecolor{deeppurple}{rgb}{0.353,0.275,0.478}
\begin{document}

\title{Lower Bound on the Representation Complexity of Antisymmetric Tensor Product Functions}
\date{\today}

\author{Yuyang Wang\thanks{State Key Laboratory of Mathematical Sciences
		, Academy of Mathematics and Systems Science, Chinese Academy of Sciences, Beijing 100190, China and School of Mathematical Sciences, University of Chinese Academy of Sciences, Beijing 100049, China (email: \href{mailto:wangyuyang@lsec.cc.ac.cn}{\blue{wangyuyang@lsec.cc.ac.cn}}).} \and 
	Yukuan Hu\thanks{CERMICS, \'Ecole des Ponts, IP Paris, 77455 Marne-la-Vall\'ee, France (email: \href{mailto:ykhu@lsec.cc.ac.cn}{\blue{ykhu@lsec.cc.ac.cn}}).} \and Xin Liu\thanks{State Key Laboratory of Mathematical Sciences, Academy of Mathematics and Systems Science, Chinese Academy of Sciences, Beijing 100190, China and School of Mathematical Sciences, University of Chinese Academy of Sciences, Beijing 100049, China (e-mail: \href{mailto:liuxin@lsec.cc.ac.cn}{\blue{liuxin@lsec.cc.ac.cn}}).}}

\maketitle

\abstract{Tensor product function (TPF) approximations have been widely adopted in solving high-dimensional problems, such as partial differential equations and eigenvalue problems, achieving desirable accuracy with computational overhead that scales linearly with problem dimensions. However, recent studies have underscored the extraordinarily high computational cost of TPFs on quantum many-body problems, even for systems with as few as three particles. A key distinction in these problems is the antisymmetry requirement on the unknown functions. In the present work, we rigorously establish that the minimum number of involved terms for a class of TPFs to be exactly antisymmetric increases exponentially fast with the problem dimension. This class encompasses both traditionally discretized TPFs and the recent ones parameterized by neural networks. Our proof exploits the link between the antisymmetric TPFs in this class and the corresponding antisymmetric tensors and focuses on the Canonical Polyadic rank of the latter. As a result, our findings reveal that low-rank TPFs are fundamentally unsuitable for high-dimensional problems where antisymmetry is essential.}


\section{Introduction} 

\par  High-dimensional partial differential equations (PDEs) and eigenvalue problems frequently arise in scientific and engineering applications. In solving these problems, traditional numerical methods, such as finite difference and finite element methods, suffer from the curse of dimensionality, in that both the storage and computational costs grow exponentially with the problem dimension. For example, discretizing an $N$-dimensional domain with merely two grid points per dimension can result in a dense tensor of size $2^N$, requiring over 1 ZB of memory for direct storage in double precision when $N\ge 70$. 

\par To circumvent the curse of dimensionality, tensor product functions (TPFs) have been proposed to approximate the original high-dimensional ones \cite{bachmayr2023low,beylkin2002numerical,hackbusch2012tensor}. For a function $f$ defined on a Cartesian product domain $\bigtimes_{j=1}^N\Omega_j$, with $\Omega_j\subseteq \R^d$ ($j=1,\ldots,N$), its TPF approximation is of the form
\begin{equation*}
	f(\rr_1,\ldots,\rr_N) \approx \sum_{i=1}^p\psi_{i1}(\rr_1)\cdots\psi_{iN}(\rr_N),\quad\forall~\rr=(\rr_1,\ldots,\rr_N)\in\bigtimes_{j=1}^N\Omega_j,
\end{equation*}
where $p\in\N$ is called the separation rank in the literature \cite{beylkin2002numerical,beylkin2005algorithms} (see Remark \ref{rem:TPF and separation rank} for a rigorous definition) and $\psi_{ij}$ is a function defined on $\Omega_j$ ($i=1,\ldots,p$, $j=1,\ldots,N$). 
Combined with the traditional discretization methods for $\{\psi_{ij}\}$, the TPF approximation requires a computational cost growing linearly with respect to $N$. The foundation of TPFs can be traced back to the work of Schmidt \cite{schmidt1908theorie} on the case of $N=2$, $d=1$ with $\psi_{ij} \in \calL^2(\Omega_j)$, known as the Schmidt decomposition. Up to this point, this construction has been developed to handle higher-dimensional settings and function spaces with more favorable properties \cite{bazarkhanov2015nonlinear,beylkin2002numerical,beylkin2005algorithms,dolgov2021functional,griebel2023analysis,griebel2023low,hackbusch2007tensor,hackbusch2008tensor,mohlenkamp2005trigonometric,oseledets2013constructive}.

\par In recent years, neural networks have gained significant attention for their powerful approximation capabilities (see \cite{devore2021neural} and the references therein for a comprehensive review) and have provided mesh-free solutions for PDEs (see, e.g., \cite{dai2024subspacemethodbasedneural,weinan2018deep,huang2024adaptiveneuralnetworkbasis,Yulei_Liao_2021,lin2024adaptiveneuralnetworksubspace,raissi2019physics,siegel2023greedy,sirignano2018dgm,xiao2024learningsolutionoperatorspdes,yu2024naturaldeepritzmethod}). Particularly in the high-dimensional regime, Wang et al. \cite{wang2022tensor} introduce the tensor neural networks (TNNs) by utilizing neural networks to construct $\psi_{ij}$ in the TPF approximation, effectively eliminating the need for high-dimensional integrals (see Section \ref{subsec:TPF and TNN} for an illustration). TNN approximations have demonstrated high accuracy across various problems \cite{Hu_2024,kao2024petnns,liao2022solving,wang2024tensorFP,wang2022tensor,wang2024solving,wang2024computing}; for instance, with a separation rank of $p\le30$, they achieve errors of $\calO(10^{-7})$ for a 20,000-dimensional Schr\"odinger equation with coupled quantum harmonic oscillator potential \cite{Hu_2024}.

\par However, recent studies have highlighted the limitations of TNN approximations in addressing the electronic Schr\"odinger equation in quantum mechanics \cite{liao2024solvingschrodingerequationusing}. This problem differs from the aforementioned applications of TNN approximations in the additional antisymmetry constraint on the unknown functions. 
Mathematically, an antisymmetric function $f$ defined on $\Omega^N$ ($\Omega\subseteq\R^d$) satisfies
	\begin{equation}
		\label{eq:antisym}
		f\lrbracket{\scrT_{ij}(\rr_1,\dots,\rr_N)} = -f(\rr_1,\dots,\rr_N),\quad \forall~i\neq j,
	\end{equation}
	where $\scrT_{ij}:\Omega^N \mapsto \Omega^N$ swaps the $i$-th and $j$-th coordinates while keeping all the others unchanged.
In the electronic Schr\"odinger equation, 
the electronic wave function describes the behavior of electrons, whose antisymmetry is grounded in the fundamental Pauli exclusion principle for fermions \cite{pauli1925zusammenhang}. In \cite{liao2024solvingschrodingerequationusing}, the authors approximate the electronic wave function with TNNs and optimize the neural network parameters through a variational principle. To achieve the desired accuracy even on systems with as few as three electrons, they have to employ TNNs with $p=50$ and undergo extraordinarily lengthy optimization processes.

\par The challenge can stem from the representation of antisymmetry. We first note that TPFs are not inherently antisymmetric. For instance, if we set $\psi_{i1}=\psi_{i2}=\cdots=\psi_{iN}$ for each $i$, the resulting function is symmetric. However, antisymmetry can be crucial for applications of interest. In quantum mechanics, the antisymmetry of wave function ensures that identical fermions cannot occupy the same quantum state. Adopting antisymmetric ans\"atze thus guarantees the physical meaningfulness of results, as already observed in 1930 \cite{fock1930naherungsmethode,slater1930note}. We also perform numerical comparisons between TPFs and their antisymmetrization. Figure \ref{fig:tnn_comparison} reveals that applying antisymmetrization improves both accuracy and optimization efficiency. Detailed experimental settings are provided in Appendix \ref{sec_appendix}. Consequently, existing works on representing antisymmetric wave functions usually rely on determinant-based constructions, such as Slater-type wave functions \cite{feynman1956energy,fock1930naherungsmethode,hutter2020representing,jastrow1955many,pfau2020ab,slater1930note,zhou2024multilevel}, pairwise wave functions \cite{han2019universal,han2019solving,pang2022n}, Pfaffian wave functions \cite{bajdich2006pfaffian,bajdich2008pfaffian,gao2024neural}, explicit antisymmetrization \cite{lin2023explicitly}, and some implicit constructions \cite{chen2023exact,ye2024widetilde}. Within the framework of TPF approximation, these constructions correspond to separation ranks on the order of $N!$, rendering the computational advantages from the separability of TPFs inapplicable. Together, these findings motivate us to investigate the representation complexity of TPFs for antisymmetry, which refers to the minimum number of TPF terms required to represent antisymmetric functions. 

\begin{figure}[!t]
	\centering
	\begin{minipage}[b]{0.49\textwidth}
		\centering
		\includegraphics[width=\textwidth]{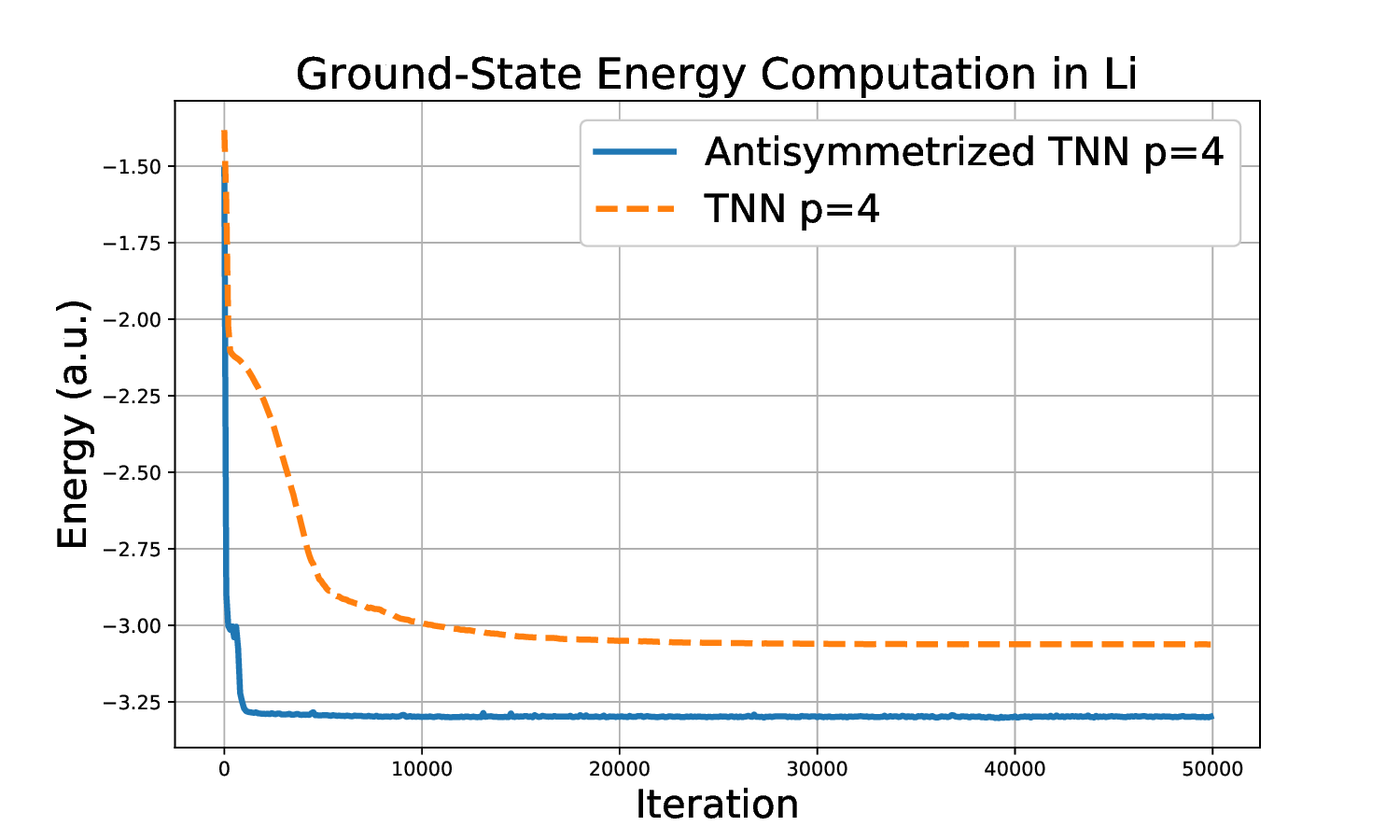}
	\end{minipage}
	\hfill
	\begin{minipage}[b]{0.49\textwidth}
		\centering
		\includegraphics[width=\textwidth]{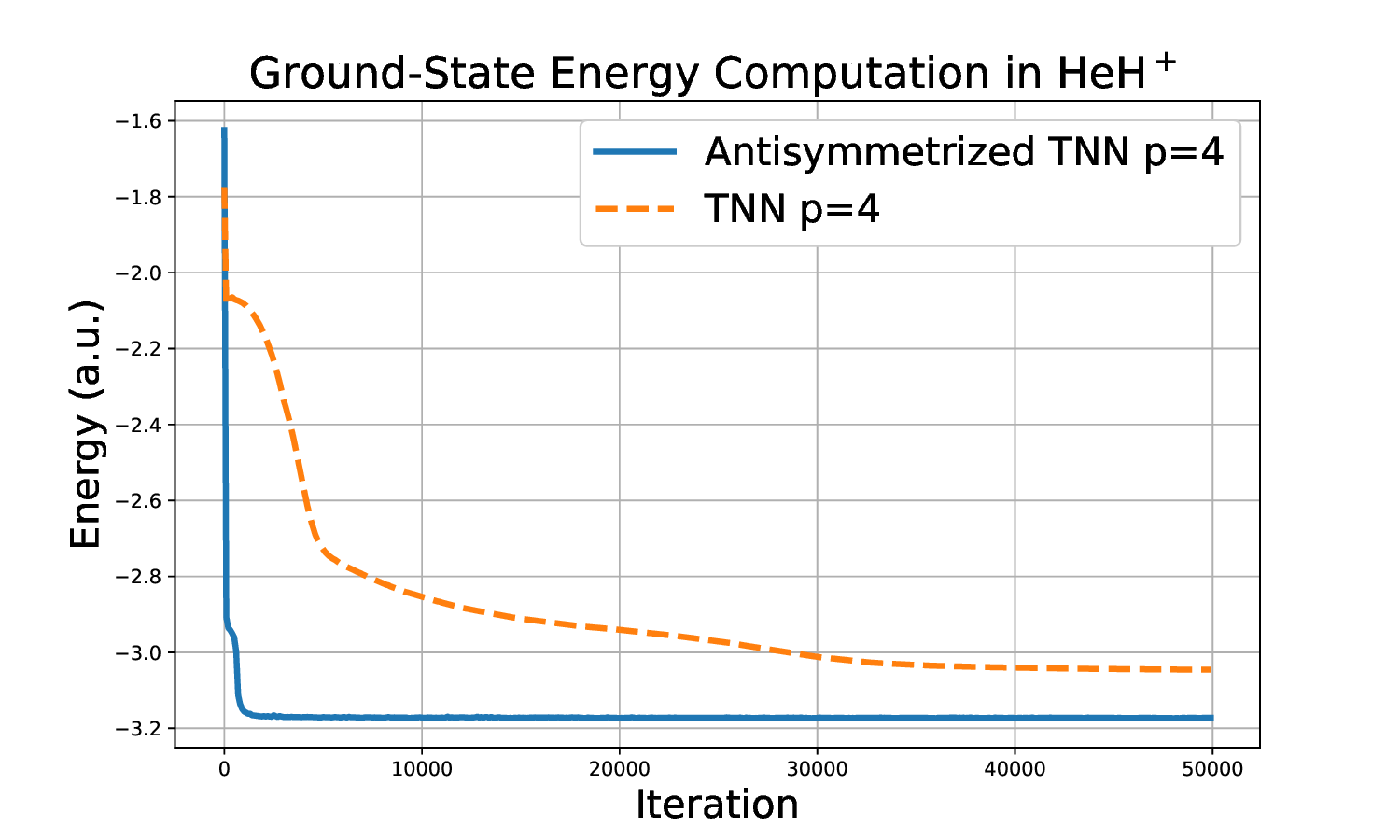}
	\end{minipage}
	\caption{Numerical comparison of the TNN approximations with and without explicit antisymmetrization on the one-dimensional systems of $\mathrm{Li}$ (left) and \(\mathrm{HeH}^+\) (right).}
	\label{fig:tnn_comparison}
\end{figure}

\vskip 0.2cm

\par \textbf{Contributions.} 
In this work, we examine a class of TPFs in which the functions $\{\psi_{ij}\}$ reside in finite-dimensional spaces. This class covers both TPFs discretized using traditional methods and the TNNs with any fixed network architecture. By linking the antisymmetric TPFs in this class to the associated antisymmetric tensors and analyzing the canonical polyadic rank of the latter, we establish an exponential lower bound of order $2^N/\sqrt{N}$ for the minimum number of terms required in the antisymmetric TPF representations. Our findings reveal that low-rank TPFs are fundamentally unsuitable for high-dimensional problems with the antisymmetry constraint, including the electronic Schr\"odinger equation in quantum mechanics.

\vskip 0.2cm

\par \textbf{Organization.} This paper is organized as follows: Section~\ref{sec_pre} introduces fundamental concepts related to TPFs, TNNs, and antisymmetric functions and tensors. Section~\ref{sec_result} presents our main theoretical results on the antisymmetric TPFs and their application to the antisymmetric TNNs with fixed network architectures. Finally, Section~\ref{sec_discussion} discusses the implications of these findings for relevant applications.

\vskip 0.2cm

\par \textbf{Notations.} Throughout this paper, scalars, vectors, and tensors are denoted by lowercase letters (e.g., $x$), bold lowercase letters (e.g., $\bx$), and uppercase bold letters (e.g., $\bX$), respectively. Operators are represented by uppercase script letters (e.g., $\scrA$). We use subscripts to indicate components or blocks of vectors and matrices. For example, $x_i$ denotes the $i$-th component of $\bx$, and $\rr_i$ denotes the $i$-th block of $\rr$. For an $N$-order tensor $\bX$, the entry indexed by $(k_1,\dots,k_N)$ is written as $\bX(k_1,\dots,k_N)$. The support of $\bX$ is denoted by $\supp(\bX):=\{(k_1,\ldots,k_N):\bX(k_1,\ldots,k_N)\ne0\}$.

\par We use $\calB(\Omega)$ for the Banach space of functions defined on the domain $\Omega$. The Cartesian products of sets are denoted by either exponents (e.g., $\Omega^N$) or multiplication symbols (e.g., $\Omega_1\times\Omega_2$ and $\bigtimes_{j=1}^N\Omega_j$). We use the notation $\otimes$ or $\bigotimes$ to represent tensor products of vectors (e.g., $\bx_1\otimes\bx_2$ and $\bigotimes_{j=1}^N\bx_j$), vector spaces (e.g., $\C^{K_1}\otimes\C^{K_2}$ and $\bigotimes_{j=1}^N\C^{K_j}$), functions (e.g., $\psi_1\otimes\psi_2$ and $\bigotimes_{j=1}^N\psi_j$), and function spaces (e.g., $\calB(\Omega_1)\otimes\calB(\Omega_2)$ and $\bigotimes_{j=1}^N\calB(\Omega_j)$). Note that the tensor product of two functions is defined as $(\psi_1\otimes\psi_2)(\rr_1,\rr_2):=\psi_1(\rr_1)\psi_2(\rr_2)$ for any $\rr_1$ and $\rr_2$. The tensor product of function spaces can be obtained in an analogous way (see Definition \ref{def:tpf}). The notation $\binom{K}{N}$ represents the binomial coefficient, defined as $\binom{K}{N}=K!/(N!(K-N)!)$. We denote the permutation group over $\{1,\ldots,N\}$ as $\calS_N$. The notation $\mathrm{sgn}(\pi)$ gives the sign of the permutation $\pi$. We use $\Theta(\cdot)$ to denote quantities that are of the same asymptotic order.


\section{Preliminaries}
\label{sec_pre}

\par This section presents the concepts and tools necessary for our main results. We begin by detailing the structures of TPFs including TNNs, which provide the core frameworks for approximating high-dimensional functions. Following this, we introduce the mathematical properties of antisymmetric functions and tensors. For the broadest generality and compatibility with real applications, all functions in this work are assumed to be complex-valued and defined on real spaces.

\subsection{Tensor Product Function and Tensor Neural Network}\label{subsec:TPF and TNN}

\par We begin by defining TPFs over general Banach spaces.
\begin{Def}[TPF]\label{def:tpf}
	Let $\calB(\Omega_j)$ be a Banach space of functions defined on $\Omega_j\subseteq \R^d$ ($j=1,\ldots,N$). The space of TPFs on $\{\calB(\Omega_j)\}_{j=1}^N$ is defined as
	\begin{equation}
		\label{eq:tpf}
		\bigotimes_{j=1}^N \calB(\Omega_j) := \lrbrace{f\in \calB(\bigtimes_{j=1}^N\Omega_j): f=\sum_{i=1}^p \bigotimes_{j=1}^N\psi_{ij},~\text{where}~p\in\N,~\psi_{ij}\in \calB(\Omega_j),~\forall~i,j},
	\end{equation}
	and each function $f\in\bigotimes_{j=1}^N \calB(\Omega_j)$ is called a TPF (on $\{\calB(\Omega_j)\}_{j=1}^N$).
\end{Def}

\begin{remark}
	The notation $\bigotimes_{j=1}^N \calB(\Omega_j)$ sometimes stands for the closure of the right-hand side of Eq. \eqref{eq:tpf} \cite[Section 4.2.1]{hackbusch2012tensor}, ensuring completeness. Since we focus on the functions with explicit tensor product structures, we omit the closure and, with slight abuse of notation, still denote the resulting space by $\bigotimes_{j=1}^N\calB(\Omega_j)$.
\end{remark}

\par Unless stated, we assume that $\Omega_1=\cdots=\Omega_N=\Omega$, and we write $\bigotimes^N \calB(\Omega):=\bigotimes_{j=1}^N \calB(\Omega_j)$. Since for any separable Hilbert space $\calB(\Omega)$, $\bigotimes^N \calB(\Omega)$ is dense in $\calB(\Omega^N)$ for any compact $\Omega\subseteq \R^{d}$ \cite[Theorem 3.12]{weidmann1980operators}, TPFs can in principle approximate any function in $\calB(\Omega^N)$ to arbitrary accuracy. 

\par To facilitate our analysis, we define the rank of a TPF.

\begin{Def}[TPF rank]
	\label{def:tpf_rank}
	The TPF rank of a function $f\in \bigotimes^N \calB(\Omega)$, denoted by $\rank_{\calB(\Omega)}(f)$, is defined as
	$$\rank_{\calB(\Omega)}(f) := \min \lrbrace{p\in\N: f=\sum_{i=1}^p\bigotimes_{j=1}^N\psi_{ij},~\text{where}~\psi_{ij} \in \calB(\Omega),~\forall~i,j},$$
	or in other words, the minimum number of terms for the TPF representation of $f$.
\end{Def}

\begin{remark}\label{rem:TPF and separation rank}
	The definitions of TPF rank and separation rank \cite{bazarkhanov2015nonlinear,beylkin2002numerical,beylkin2005algorithms,griebel2023analysis} differ fundamentally. The separation rank of a function is typically used in the context of approximation \cite{bazarkhanov2015nonlinear,beylkin2002numerical,beylkin2005algorithms,griebel2023analysis}: Given a tolerance $\eps>0$, the separation rank of a function $f$ is any $p\in\N$ such that there exist $\psi_{ij}\in\calB(\Omega)$ ($i=1,\ldots,p$, $j=1,\ldots,N$) satisfying
	$$\norm{f-\sum_{i=1}^p\bigotimes_{j=1}^N\psi_{ij}}_{\calB(\Omega^N)}\le\eps$$
	The optimal (or minimal) separation rank of $f$ then refers to its minimum separation rank for the given tolerance. 
	
	\par In contrast, the TPF rank is defined specifically for TPFs, capturing the minimal number of terms required for an exact TPF representation. We shall emphasize that the exact representation of antisymmetry is meaningful in applications of interest. For example, the antisymmetry of wave function in quantum mechanics guarantees the physical correctness of results \cite{fock1930naherungsmethode,slater1930note}. Intuitively, it is also natural to approximate antisymmetric functions with candidates in the same class. 
\end{remark}

\par The TNN can be viewed as a special case of TPF with $d=1$ and $\{\psi_{ij}\}$ parameterized by neural networks \cite{wang2022tensor}. Each input $r_j\in\Omega\subseteq\R$ is independently processed through a fully connected subnetwork that outputs a $p$-dimensional vector:
\begin{equation*}
	r_j \xlongrightarrow[\text{subnetwork}]{\text{the $j$-th fully connected}} \lrbracket{\psi_{1j}(r_j;\btheta_j), \dots, \psi_{pj}(r_j;\btheta_j)}^{\top},
\end{equation*}
where $\btheta_j$ represents the parameters of the $j$-th subnetwork. Let $\btheta$ collect the parameters from $N$ subnetworks. The TNN function $f_{\btheta}$ is then defined as
\begin{equation}
	\label{eq:TNN_general_fun}
	f_{\btheta}:= \sum_{i=1}^p\bigotimes_{j=1}^N \psi_{ij}(\cdot;\btheta_{j}),
\end{equation}
Figure~\ref{fig:TNN} illustrates the architecture of TNN. 
\begin{figure}[htb]
	\centering
	\includegraphics[width=0.8\linewidth]{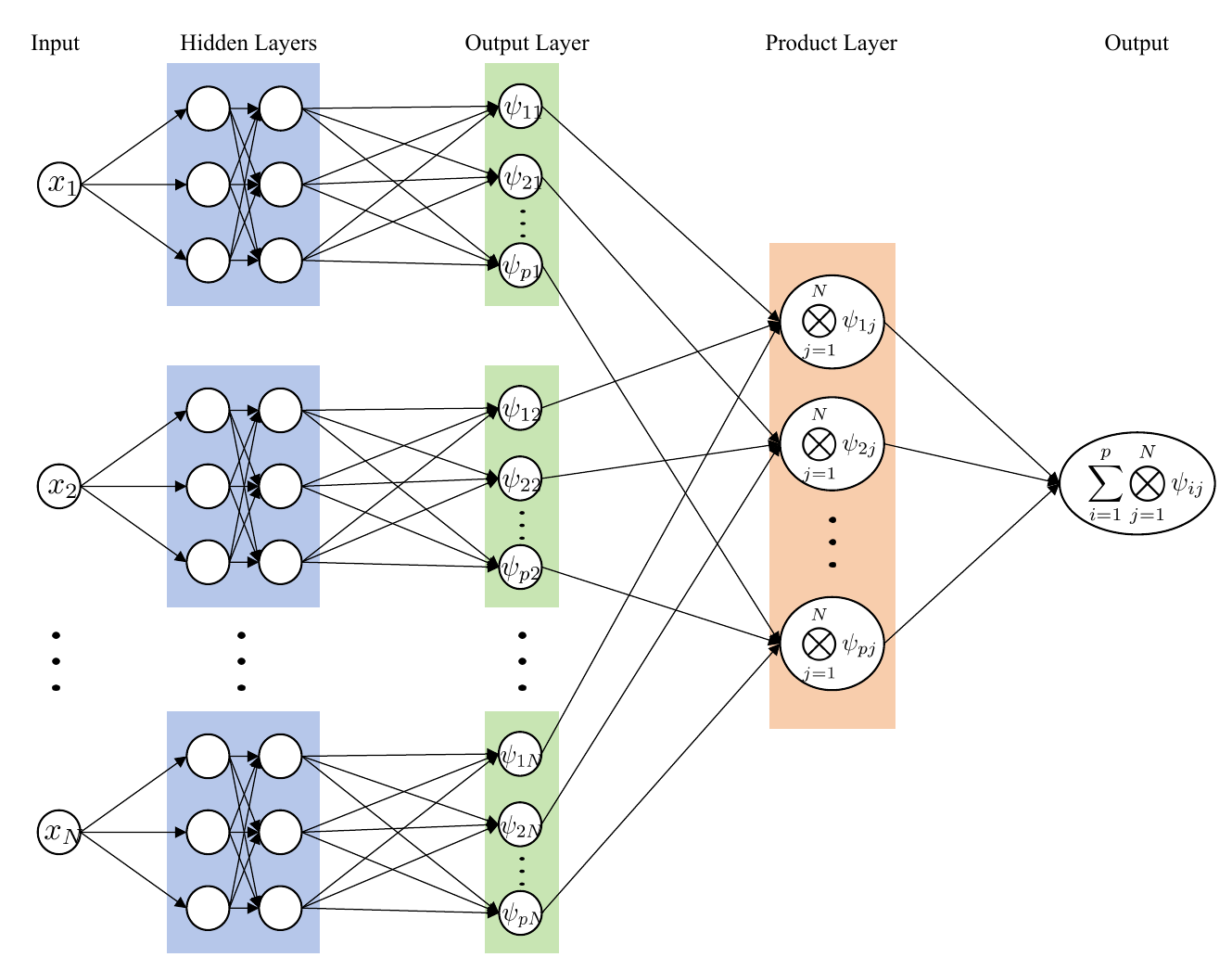}
	\caption{Architecture of TNN.}
	\label{fig:TNN}
\end{figure}

\par In the context of quantum mechanics, to approximate the electronic wave function of an isolated, non-relativistic, time-independent, and spinless $N$-electron system in $\Omega\subseteq\R^d$, the TNN wave function ansatz follows a similar structure. Each electronic coordinate is represented by a $d$-dimensional vector $\rr_j=(r_{j1},\ldots,r_{jd})^{\top} \in \R^d$, with each one-dimensional coordinate processed by an independent subnetwork, resulting in a total of $dN$ independent subnetworks \cite{liao2024solvingschrodingerequationusing}. The TNN wave function ansatz for this system is
\begin{equation}
	\label{eq:tnn_wf} 
	f_{\btheta}(\rr) := \sum_{i=1}^p\prod_{j=1}^N\lrbracket{\prod_{k=1}^d\psi_{ijk}(r_{jk};\btheta_{jk})},\quad\forall~\rr\in\Omega^N,
\end{equation} 
where $\btheta$ collects the parameters of $dN$ subnetworks.

\subsection{Antisymmetric Functions}\label{subsec:antisym fun}

\par Given a domain $\Omega\subseteq \R^d$, the antisymmetric function $f$ defined on $\Omega^N$ changes sign when any two inputs are exchanged, as defined in Eq. \eqref{eq:antisym}. Equivalently,
\begin{equation}
	\label{eq:antisym_perm}
	f(\rr_1,\dots,\rr_N) = \sgn(\pi)f(\rr_{\pi(1)},\dots,\rr_{\pi(N)}), \ \forall~\pi \in \calS_N,~\rr\in \Omega^N,
\end{equation}
where $\calS_N$ is the permutation group on $\{1,\dots,N\}$, and $\sgn(\pi)$ is the sign of permutation $\pi$. The set of all antisymmetric functions in $\bigotimes^N\calB(\Omega)$ is denoted by 
$$\calA\lrbracket{\bigotimes^N\calB(\Omega)} := \lrbrace{f\in \bigotimes^N\calB(\Omega): f\text{ satisfies Eq. \eqref{eq:antisym_perm}}}.$$

\par The antisymmetrizer (also known as the antisymmetrizing operator) \cite{lowdin1958correlation} on $\Omega^N$, denoted by $\scrA$, is a linear operator that antisymmetrizes functions. For any $f$ defined on $\Omega^N$, the action of $\scrA$ on $f$ is defined as
\begin{equation}
	\label{eq:anti_proj}
	\scrA[f](\rr_1,\dots,\rr_N) := \dfrac{1}{N!}\sum_{\pi \in \calS_{N}}\sgn(\pi)f(\rr_{\pi(1)},\dots,\rr_{\pi(N)}),\quad \forall~\rr\in\Omega^N.
\end{equation}
For a function $f\in\bigotimes^N\calB(\Omega)$, it belongs to $\calA(\bigotimes^N\calB(\Omega))$ if and only if $f = \scrA[f]$. When $\bigotimes^N\calB(\Omega)$ is a Hilbert space, the antisymmetrizer $\scrA$ acts as the orthogonal projection operator onto $\calA(\bigotimes^N\calB(\Omega))$.
\begin{remark}
	\label{re:hf_slater}
	Let us revisit the TPF in Eq. \eqref{eq:tpf}. When $p=1$, it reduces to the Hartree product \cite{hartree1928wave}, an early wave function ansatz in the quantum mechanics literature that lacks antisymmetry. When $p=N!$ with $\psi_{i1} = \text{sign}(\pi_i) \psi_{\pi_i(1)}$ and $\psi_{ij} = \psi_{\pi_i(j)}$ for $1<j\le N$, where $\pi_i \in \calS_N$ for $i=1,\ldots,N!$, the TPF becomes the Slater determinant \cite{fock1930naherungsmethode,slater1930note}, a well-known wave function ansatz or ingredient in various Hartree-Fock-based approximations for electronic structure calculations \cite[Chapter 5]{helgaker2000molecular}. If antisymmetrized as defined in Eq. \eqref{eq:anti_proj}, the TPF becomes a sum of $p$ Slater determinants, whose expansion contains at most $pN!$ terms, and thus the TPF rank should satisfy $1 <\rank_{\calB(\Omega)}(\scrA[f])\le pN!$. Note that this seemingly daunting exponential rank does not necessarily imply an unacceptably high computational cost; the Slater determinant can be implemented in polynomial time due to its algebraic structure.
\end{remark}

\subsection{Antisymmetric Tensors}

\par The space of $N$-order $(K_1,\dots,K_N)$-dimensional tensors, denoted by $\bigotimes_{j=1}^N\C^{K_j}$, is defined as
$$\bigotimes_{j=1}^N\C^{K_j} := \C^{K_1}\otimes \cdots \otimes \C^{K_N}.$$
In the special case where $K_1=\cdots=K_N=K$, we denote this space as $\productspace{N}{K}$. Since the TPF rank is highly relevant to the Canonical Polyadic (CP) rank of tensor (as will be shown later), we provide the formal definition of the latter as follows. 
\begin{Def}[CP Rank \cite{hitchcock1927expression}]
	The CP rank of a tensor $\bX \in \bigotimes_{j=1}^N\C^{K_j}$, denoted by $\rank_{\cp} (\bX)$, is defined as
	$$\rank_{\cp} (\bX) := \min \lrbrace{p\in \N: \bX= \sum_{i=1}^p \bx_{i1}\otimes \cdots \otimes \bx_{iN}, \ \bx_{ij}\in \C^{K_j},~1\le i\le p,~1\le j \le N}.$$
\end{Def}

\par In analogy to the antisymmetric function, an antisymmetric tensor changes sign with the exchange of any two indices. Following this, we denote the closed subspace of $\bigotimes^N\C^K$ for all antisymmetric tensors as
$$\calA\lrbracket{\productspace{N}{K}} := \lrbrace{\bX \in \productspace{N}{K}: \bX(k_1,\ldots,k_N)=\sgn(\pi)\bX(k_{\pi(1)},\dots,k_{\pi(N)}),\forall~\pi \in \calS_N}.$$
The unique orthogonal projection operator onto $\calA(\productspace{N}{K})$, with a slight abuse of notation, is also denoted by $\scrA$: For any $N$-order tensor $\bX$,
$$\scrA[\bX]  (k_1,\ldots,k_N):=\dfrac{1}{N!}\sum_{\pi \in \calS_N}\sgn(\pi)\bX (k_{\pi(1)},\dots,k_{\pi(N)}).$$
If a tensor $\bX$ is of a tensor product form $\bX = \sum_{i=1}^p \bx_{i1}\otimes \cdots \otimes \bx_{iN}$, then its projection onto $\calA(\bigotimes^N\C^K)$ can be directly calculated by
$$\scrA[\bX] = \dfrac{1}{N!}\sum_{\pi\in \calS_N}\sum_{i=1}^{p}\sgn(\pi)\bx_{i\pi(1)}\otimes\cdots\otimes\bx_{i\pi(N)}.$$

\par The following proposition gathers some key properties of antisymmetric tensors.
\begin{prop}[{\cite[Section 3.5.1]{hackbusch2012tensor}}]\label{prop:properties of antisym tensors}
	Let $\bX \in \calA(\productspace{N}{K})$. Then:
	\begin{itemize}
		\item If there exist $i\neq j$ such that $k_i=k_j$, then $\bX(k_1,\ldots,k_N)= 0$.
		\item If $\calA(\productspace{N}{K}) \neq \{\mathbf{0}\}$, then $K\ge N$.
		\item If $K \ge N$, then $\dim \calA(\productspace{N}{K}) =\displaystyle\binom{K}{N}$.
	\end{itemize}
\end{prop}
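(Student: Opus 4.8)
The plan is to prove the three claims of Proposition \ref{prop:properties of antisym tensors} in sequence, as the later items naturally build upon the earlier ones. The whole argument rests on manipulating the defining relation $\bX(k_1,\ldots,k_N)=\sgn(\pi)\bX(k_{\pi(1)},\ldots,k_{\pi(N)})$ for all $\pi\in\calS_N$.

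\textbf{First item.} Suppose $k_i=k_j$ for some $i\neq j$. I would apply the antisymmetry relation to the transposition $\pi=\scrT_{ij}$ that swaps positions $i$ and $j$. Since $\sgn(\scrT_{ij})=-1$ and swapping two equal indices leaves the entry unchanged, one obtains $\bX(k_1,\ldots,k_N)=-\bX(k_1,\ldots,k_N)$, forcing $\bX(k_1,\ldots,k_N)=0$. This is immediate and serves as the key structural fact for the remaining items.

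\textbf{Second item.} I would argue the contrapositive: if $K<N$, then $\calA(\productspace{N}{K})=\{\mathbf{0}\}$. Any index tuple $(k_1,\ldots,k_N)$ with entries drawn from $\{1,\ldots,K\}$ and $N>K$ must, by the pigeonhole principle, have a repeated entry $k_i=k_j$ with $i\neq j$; the first item then forces every entry of $\bX$ to vanish. Hence a nonzero antisymmetric tensor can exist only when $K\ge N$.

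\textbf{Third item.} This is the substantive claim and where I expect the main work to lie. The strategy is to exhibit an explicit basis indexed by the $\binom{K}{N}$ strictly increasing tuples $1\le k_1<k_2<\cdots<k_N\le K$. By the first item, every nonzero entry of an antisymmetric $\bX$ has distinct indices, and by the antisymmetry relation the value on any tuple with distinct entries is determined (up to the sign of the sorting permutation) by its value on the sorted tuple. Thus the entries on strictly increasing tuples can be assigned freely and determine $\bX$ completely, giving a linear isomorphism between $\calA(\productspace{N}{K})$ and $\C^{\binom{K}{N}}$. Concretely, for each increasing multi-index $\bk$ one can define a basis tensor as the antisymmetrization $\scrA[\be_{k_1}\otimes\cdots\otimes\be_{k_N}]$ (suitably normalized), verify these are antisymmetric, linearly independent, and spanning. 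The main obstacle is the careful bookkeeping needed to confirm that these antisymmetrized elementary tensors are linearly independent and that their count is exactly $\binom{K}{N}$ rather than some overcount arising from permuted multi-indices; the distinct-index observation from the first item is precisely what rules out such overcounting.
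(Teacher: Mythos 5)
Your argument is correct and is essentially the same one the paper relies on (the proposition is cited from Hackbusch, but the surrounding text constructs exactly your basis tensors $\bE_{\bk}$ in Eq.~\eqref{eq:basis_tensor_product}, notes $\abs{\Lambda}=\binom{K}{N}$ with independent entries on increasing multi-indices, and uses the disjoint-support observation to get the expansion \eqref{eq:anti_tensor_expand}). The only point worth making explicit in your third item is that linear independence follows immediately because distinct increasing tuples yield basis tensors with disjoint supports, which closes the ``careful bookkeeping'' you flag.
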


\par To write out the basis of $\calA(\productspace{N}{K})$, we introduce the multi-index set
\begin{equation}
	\label{eq:index_set}
	\Lambda := \lrbrace{\bk=(k_1,\ldots,k_N): 1\le k_1<\cdots < k_N\le K}.
\end{equation}
Notice that $\abs{\Lambda}=\binom{K}{N}$, and the entries of an antisymmetric tensor $\bX$ indexed by the multi-indices in $\Lambda$ are independent. For each $\bk=(k_1,\ldots,k_N)\in \Lambda$, we define the corresponding basis tensor $\bE_{\bk} \in \calA(\productspace{N}{K})$ as
\begin{equation}
	\label{eq:basis_tensor_product}
	\bE_{\bk} := \sum_{\pi \in \calS_N} \sgn(\pi) \be_{k_{\pi(1)}} \otimes \cdots \otimes \be_{k_{\pi(N)}},
\end{equation}
where $\be_{j}\in\C^K$ is the $j$-th standard unit vector in $\C^K$. By the definition of $\bE_{\bk}$, we have
\begin{equation}
	\label{eq:basis_anti_per}
	\bE_{\bk}(\ell_1,\dots,\ell_N) = \begin{cases}
		1 & \text{if }\{\ell_1,\dots,\ell_N\} \text{ is an even permutation of } \{k_1,\dots,k_N\},\\
		-1 & \text{if }\{\ell_1,\dots,\ell_N\} \text{ is an odd permutation of } \{k_1,\dots,k_N\},\\
		0 & \text{otherwise}.
	\end{cases}
\end{equation}
Let $c_{\bk} := \bX (k_1,\ldots,k_N)$ for any $\bk\in\Lambda$. The expansion of $\bX \in \calA(\productspace{N}{K})$ in the basis $\{\bE_{\bk}\}_{\bk\in\Lambda}$ is
\begin{equation}
	\label{eq:anti_tensor_expand}
	\bX = \sum_{\bk \in \Lambda} c_{\bk} \bE_{\bk}.
\end{equation}
When $K=N$, the multi-index set $\Lambda$ contains only one element, and thus there is only one basis tensor, denoted as
\begin{equation}
	\label{eq:det_tensor}
	\bE := \sum_{\pi \in \calS_N} \sgn(\pi) \be_{\pi(1)} \otimes \cdots \otimes \be_{\pi(N)}.
\end{equation}
This tensor is also known as the determinant tensor \cite{derksen2016nuclear}.


\section{Main results}
\label{sec_result}

\par We state and rigorously prove our main theoretical results in this section. In particular, we focus on the finite-dimensional $\bigotimes^N\calB(\Omega)$ and establish an exponential lower bound on the TPF rank of any nonzero antisymmetric function therein. This result is further applied to antisymmetric TNNs.

\subsection{Representation Complexity of Antisymmetric Tensor Product Functions in Finite-Dimensional Spaces}

\par We consider the situation $\calB(\Omega)=\calF_K(\Omega)$, where $\calF_K(\Omega)$ is a $K$-dimensional space spanned by the linearly independent basis functions $\lrbrace{\bfun_k}_{k=1}^{K}$ defined on $\Omega$, i.e.,
$$\calF_K(\Omega):=\text{span}\lrbrace{\bfun_1,\dots,\bfun_K}.$$
\begin{remark}
	It should be remarked that the finite-dimensional setting described above aligns with common practice. Typically, the functions $\{\psi_{ij}\}$ in TPFs are first parameterized using a set of basis functions, followed by the implementation of a finite-dimensional discretized version \cite{bachmayr2023low}. Moreover, the results established in this finite-dimensional setting are also applicable to the TNNs parameterized by neural networks, as we will demonstrate in the next subsection.
\end{remark}

Before giving the main results, we provide the following lemma on the linear independence of TPFs.
\begin{lem}
	\label{lemma:linear_indep}
	For any $1\le k_1,\ldots,k_N\le K$, let the function $\Phi_{N,\bk}$ ($\bk:=(k_1,\ldots,k_N)$) be defined as $\Phi_{N,\bk}:=\bigotimes_{j=1}^N\bfun_{k_j}$. Then $\{\Phi_{N,\bk}\}_{\bk}$ are linearly independent.
\end{lem}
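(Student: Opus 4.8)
The plan is to prove linear independence by induction on the order $N$, peeling off one coordinate at a time and invoking the linear independence of the single-variable basis $\{\bfun_k\}_{k=1}^K$ at each step. The base case $N=1$ is immediate: the functions $\Phi_{1,\bk}=\bfun_{k_1}$ are precisely the basis functions $\{\bfun_k\}_{k=1}^K$, which are linearly independent by the definition of $\calF_K(\Omega)$.

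For the inductive step, I would assume the claim at order $N-1$ and suppose that some linear combination vanishes identically, i.e.
$$\sum_{k_1=1}^K\lrbracket{\sum_{k_2,\ldots,k_N}c_{k_1,\ldots,k_N}\bfun_{k_2}(\rr_2)\cdots\bfun_{k_N}(\rr_N)}\bfun_{k_1}(\rr_1)=0,\quad\forall~\rr\in\Omega^N.$$
I would first \emph{fix} an arbitrary $(\rr_2,\ldots,\rr_N)\in\Omega^{N-1}$, so that each inner sum becomes a genuine scalar, and regard the displayed identity as a linear combination of $\{\bfun_{k_1}\}_{k_1=1}^K$ in the single variable $\rr_1$. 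Linear independence of $\{\bfun_k\}$ then forces each such scalar coefficient to vanish. Since the point $(\rr_2,\ldots,\rr_N)$ was arbitrary, this yields, for every fixed $k_1$, the function identity $\sum_{k_2,\ldots,k_N}c_{k_1,\ldots,k_N}\Phi_{N-1,(k_2,\ldots,k_N)}=0$ on $\Omega^{N-1}$. Applying the inductive hypothesis for each $k_1$ gives $c_{k_1,\ldots,k_N}=0$ for all multi-indices, which closes the induction and proves the lemma.

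The argument is essentially routine, and the only point demanding care is the order of the quantifiers in the inductive step: one must first hold the remaining coordinates fixed to turn the bracketed sums into honest scalars before applying the single-variable linear independence, and only afterwards let those coordinates range over $\Omega^{N-1}$ to recover a function identity eligible for the inductive hypothesis. I note that the proof is purely algebraic and uses no completeness or topology of $\calB(\Omega)$, so it applies verbatim to the finite-dimensional span $\calF_K(\Omega)$; an equivalent, coordinate-free route would instead evaluate the dual basis functionals $\lambda_{i_1}\otimes\cdots\otimes\lambda_{i_N}$ dual to $\{\bfun_k\}$ against the combination, but the induction above avoids introducing that machinery.
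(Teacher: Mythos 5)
Your proof is correct and follows essentially the same inductive argument as the paper: induction on $N$, peeling off one coordinate and using the linear independence of $\{\bfun_k\}_{k=1}^K$ together with the inductive hypothesis. The only (immaterial) difference is that you peel off the first coordinate and apply single-variable independence before the inductive hypothesis, whereas the paper peels off the last coordinate and applies the inductive hypothesis first.
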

\begin{proof}
	We prove by mathematical induction on $N$. For the case of $N=1$, the conclusion holds trivially since $\lrbrace{\bfun_k}_{k=1}^K$ are already linearly independent. Now assume that the conclusion holds for the case of $N=m-1$, that is, the functions $\{\Phi_{m-1,\bk}\}_{\bk}$ are linearly independent. Consider the case of $N=m$.

	Suppose that there exist constants $\{c_{k_1,\dots,k_m}\}$ such that
	$$\sum_{k_1=1}^K\cdots \sum_{k_m=1}^K c_{k_1,\dots,k_m}\bigotimes_{j=1}^m\bfun_{k_j}=0. $$
	By rearranging terms, we obtain
	$$\sum_{k_1=1}^K \cdots \sum_{k_{m-1}=1}^K \lrbracket{\bigotimes_{j=1}^{m-1}\bfun_{k_j}}\otimes\lrbracket{\sum_{k_m=1}^K c_{k_1,\dots,k_m}\bfun_{k_m}}=0.$$
	Therefore, for any $\bx=(\bx_1,\dots,\bx_m)\in \Omega^m$, we have
		$$\sum_{k_1=1}^K \cdots \sum_{k_{m-1}=1}^K \lrbracket{ \sum_{k_m=1}^K c_{k_1,\dots,k_m}\varphi_{k_m}(\bx_m)}\prod_{j=1}^{m-1}\varphi_{k_j}(\bx_j) = 0. $$
	By induction, we conclude that
	$$\sum_{k_m=1}^K c_{k_1,\dots,k_m}\bfun_{k_m}(\bx_m)=0, \quad\forall\  1\le k_1,\dots, k_{m-1}\le K,~\bx_m\in\Omega.$$
	Since $\lrbrace{\bfun_k}_{k=1}^K$ are linearly independent, it follows that $c_{k_1,\dots,k_m}=0$ for any $1\le k_1,\ldots,k_m\le K$, which implies that the conclusion also holds for the case of $N=m$. The proof is complete.
\end{proof}

\par For any $f\in\bigotimes^N\calF_K(\Omega)$, we could identify its TPF rank as the CP rank of an $N$-order tensor, as established by the following theorem.

\begin{The}
	\label{the:general_main_result}
	For any $f \in \bigotimes^N\calF_K(\Omega)$, there exists a tensor $\bX \in \productspace{N}{K}$ such that
	$$\rank_{\calF_K(\Omega)}(f) = \rank_{\cp}(\bX).$$
\end{The}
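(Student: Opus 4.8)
The plan is to exhibit a rank-preserving linear isomorphism between the tensor space $\productspace{N}{K}$ and the function space $\bigotimes^N\calF_K(\Omega)$, so that the TPF rank of $f$ and the CP rank of its associated coefficient tensor coincide term-for-term. First I would pass to coordinates: every $\psi\in\calF_K(\Omega)$ has a unique coefficient vector $\bx\in\C^K$ with $\psi=\sum_{k=1}^K x_k\bfun_k$. Distributing the tensor product over the basis expansions of the factors, a single TPF term becomes
\[
\bigotimes_{j=1}^N\psi_{ij}=\sum_{\bk}\lrbracket{\prod_{j=1}^N(\bx_{ij})_{k_j}}\Phi_{N,\bk}=\sum_{\bk}\lrbracket{\bx_{i1}\otimes\cdots\otimes\bx_{iN}}(\bk)\,\Phi_{N,\bk},
\]
so the coefficient array of a rank-one TPF term in the $\{\Phi_{N,\bk}\}$ expansion is precisely the rank-one tensor $\bx_{i1}\otimes\cdots\otimes\bx_{iN}$. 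Summing over $i=1,\ldots,p$, any representation $f=\sum_{i=1}^p\bigotimes_{j=1}^N\psi_{ij}$ reads $f=\sum_{\bk}\bX(\bk)\,\Phi_{N,\bk}$ with $\bX:=\sum_{i=1}^p\bx_{i1}\otimes\cdots\otimes\bx_{iN}\in\productspace{N}{K}$.

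Second, I would invoke Lemma~\ref{lemma:linear_indep}. Since the functions $\{\Phi_{N,\bk}\}_{\bk}$ are linearly independent, the linear map $T:\productspace{N}{K}\to\bigotimes^N\calF_K(\Omega)$ defined by $T(\bX):=\sum_{\bk}\bX(\bk)\Phi_{N,\bk}$ is injective, and it is surjective onto $\bigotimes^N\calF_K(\Omega)$ by the expansion above. Consequently $\bX$ is the \emph{unique} tensor with $T(\bX)=f$, independent of which TPF representation of $f$ was used; this is what makes the statement ``there exists a tensor $\bX$'' well posed.

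Finally, the rank equality follows because $T$ restricts to a bijection between rank-one tensors and rank-one TPF terms. For $\rank_{\cp}(\bX)\le\rank_{\calF_K(\Omega)}(f)$, I would take a minimal TPF decomposition of $f$ with $p=\rank_{\calF_K(\Omega)}(f)$ terms; by the first step it yields a CP decomposition of $\bX$ with $p$ terms. For the reverse inequality, I would take a minimal CP decomposition $\bX=\sum_{i=1}^q\bx_{i1}\otimes\cdots\otimes\bx_{iN}$ with $q=\rank_{\cp}(\bX)$, set $\psi_{ij}:=\sum_k(\bx_{ij})_k\bfun_k\in\calF_K(\Omega)$, and observe that $\sum_{i=1}^q\bigotimes_{j=1}^N\psi_{ij}$ maps under $T$ to $\bX$, hence equals $f$ by injectivity of $T$; this is a $q$-term TPF representation. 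Combining the two inequalities gives the claim.

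The step I expect to carry the real weight is the use of linear independence (Lemma~\ref{lemma:linear_indep}): it is exactly the injectivity of the coordinate-to-function map $T$ that guarantees the tensor $\bX$ is well defined and that \emph{every} TPF decomposition of $f$ corresponds to a genuine CP decomposition of the \emph{same} $\bX$, and conversely. Without it, distinct tensors could collapse to the same function, and the two-sided rank correspondence would fail. The remaining computations are routine bookkeeping once this isomorphism is in place.
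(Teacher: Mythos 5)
Your proposal is correct and follows essentially the same route as the paper: expand each factor in the basis $\{\bfun_k\}$, read off the coefficient tensor of $f$ in the $\{\Phi_{N,\bk}\}$ expansion, and observe that $p$-term TPF decompositions of $f$ correspond to $p$-term CP decompositions of $\bX$, giving both inequalities. The only difference is presentational: you package the correspondence as an explicit isomorphism $T$ and invoke Lemma~\ref{lemma:linear_indep} to obtain uniqueness of $\bX$, whereas the paper simply defines $\bX$ from one chosen minimal representation and verifies the two inequalities directly, deferring the linear-independence argument to the antisymmetry step in Corollary~\ref{cor:main_result}.
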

\begin{proof}
	Let $p:=\rank_{\calF_K(\Omega)}(f)$. Then there exist functions $\{\psi_{ij}\}\subseteq\calF_K(\Omega)$ such that $f$ can be written in the form $f=\sum_{i=1}^p\bigotimes_{j=1}^N\psi_{ij}$. Expanding each $\psi_{ij}$ in terms of the basis functions in $\calF_K(\Omega)$, we obtain
	$$\psi_{ij}=\sum_{k=1}^K c_{ijk}\bfun_k,$$
	where $\{c_{ijk}\}\subseteq\C$ are the expansion coefficients. Thus, the TPF $f$ satisfies
	\begin{align}
		f(\rr) &= \sum_{i=1}^p \prod_{j=1}^N\sum_{k=1}^K c_{ijk}\bfun_k(\rr_j) \nonumber\\
		&=\sum_{i=1}^p\sum_{1\le k_1,\dots,k_N\le K}\prod_{j=1}^N c_{ijk_j}\bfun_{k_j}(\rr_j)\nonumber\\
		&=\sum_{1\le k_1,\dots,k_N\le K}\lrbracket{\sum_{i=1}^p\prod_{j=1}^Nc_{ijk_j}}\bfun_{k_1}(\rr_1)\cdots\bfun_{k_N}(\rr_N)\label{eq:TPF finite expansion}
	\end{align}
	for any $\rr\in\Omega^N$.
	
	Define a tensor $\bX\in\productspace{N}{K}$ by $\bX(k_1,\dots,k_N):=\sum_{i=1}^p \prod_{j=1}^N c_{ijk_j}$, and let $\bx_{ij}=\lrbracket{c_{ij1},\dots,c_{ijK}}^{\top}\in\C^K$ for $1\le i\le p$ and $1\le j\le N$. Then
	\begin{equation}
		\label{eq:tensor_cp_form}
		\bX = \sum_{i=1}^p \bx_{i1}\otimes \cdots \otimes \bx_{iN},
	\end{equation}
	Therefore, the CP rank of $\bX$ is at most $p$, and it follows that
	$$\rank_{\calF_K(\Omega)}(f)=p \ge \rank_{\cp}(\bX).$$

	On the other hand, let $r:= \rank_{\cp}(\bX)$, and denote the CP decomposition of $\bX$ as
	$$\bX = \sum_{i=1}^r \tilde\bx_{i1}\otimes \cdots \otimes \tilde\bx_{iN},$$
	where $\tilde\bx_{ij}=\lrbracket{\tilde{c}_{ij1},\dots,\tilde{c}_{ijK}}^{\top}\in\C^K$ for $1\le i\le p$ and $1\le j\le N$. Similar to Eq. \eqref{eq:TPF finite expansion}, we have
	$$f(\rr) = \sum_{i=1}^r \prod_{j=1}^N\sum_{k=1}^K \tilde{c}_{ijk}\bfun_k(\rr_j).$$
	Let $\tilde{\psi}_{ij}:=\sum_{k=1}^K \tilde{c}_{ijk}\varphi_k$, we can write $f = \sum_{i=1}^r \bigotimes_{j=1}^N \tilde{\psi}_{ij}$, implying that
	$$r = \rank_{\cp}(\bX) \ge \rank_{\calF_K(\Omega)}(f),$$
	which completes the proof.
\end{proof}

\par We then concentrate on antisymmetric TPFs.

\begin{cor}
	\label{cor:main_result}
	Assume that $K \ge N$ and $f\in\calA(\bigotimes^N\calF_K(\Omega))\backslash\lrbrace{0}$. Then the TPF rank of $f$ satisfies
	$$\rank_{\calF_K(\Omega)}(f) \ge \min_{\bX \in \calA(\productspace{N}{K})\backslash \lrbrace{\boldsymbol{0}}}\rank_{\cp}(\bX).$$
\end{cor}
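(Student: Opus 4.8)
The plan is to invoke Theorem \ref{the:general_main_result} to manufacture a tensor that realizes the TPF rank of $f$, and then to show that the antisymmetry of $f$ is inherited by that tensor, so that it becomes an admissible competitor in the minimization on the right-hand side. Concretely, applying Theorem \ref{the:general_main_result} yields $\bX\in\productspace{N}{K}$ with $\rank_{\calF_K(\Omega)}(f)=\rank_{\cp}(\bX)$. From the proof of that theorem, $\bX$ is precisely the coefficient array of $f$ in the system $\{\Phi_{N,\bk}\}_{\bk}$, which is linearly independent by Lemma \ref{lemma:linear_indep}; that is, $f=\sum_{1\le k_1,\dots,k_N\le K}\bX(k_1,\dots,k_N)\,\Phi_{N,\bk}$. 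It then remains to verify two things: $\bX\in\calA(\productspace{N}{K})$ and $\bX\neq\boldsymbol{0}$.

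For the antisymmetry, the key observation is that permuting the arguments of a product basis function merely permutes its multi-index, namely
$$\Phi_{N,\bk}(\rr_{\pi(1)},\dots,\rr_{\pi(N)})=\prod_{j=1}^N\bfun_{k_j}(\rr_{\pi(j)})=\Phi_{N,\bk^{\pi}}(\rr),$$
where $\bk^{\pi}:=(k_{\pi^{-1}(1)},\dots,k_{\pi^{-1}(N)})$. Substituting this into the antisymmetry relation Eq. \eqref{eq:antisym_perm} for $f$ and reindexing the sum, I would obtain two expansions of $f$ in the same system $\{\Phi_{N,\bk}\}_{\bk}$: the original one with coefficients $\bX(k_1,\dots,k_N)$, and a permuted one with coefficients $\sgn(\pi)\bX(k_{\pi(1)},\dots,k_{\pi(N)})$. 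The uniqueness of coefficients guaranteed by Lemma \ref{lemma:linear_indep} then lets me match them term by term, yielding $\bX(k_1,\dots,k_N)=\sgn(\pi)\bX(k_{\pi(1)},\dots,k_{\pi(N)})$ for every $\pi\in\calS_N$, which is exactly the defining relation of $\calA(\productspace{N}{K})$.

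For nonzero-ness, the same linear independence shows the correspondence $f\mapsto\bX$ is injective, so $\bX=\boldsymbol{0}$ would force $f=0$, contradicting $f\notin\{0\}$; hence $\bX\in\calA(\productspace{N}{K})\backslash\{\boldsymbol{0}\}$. Combining the rank identity with the feasibility of $\bX$ then gives $\rank_{\calF_K(\Omega)}(f)=\rank_{\cp}(\bX)\ge\min_{\bX'\in\calA(\productspace{N}{K})\backslash\{\boldsymbol{0}\}}\rank_{\cp}(\bX')$, as claimed; the hypothesis $K\ge N$ together with Proposition \ref{prop:properties of antisym tensors} ensures the feasible set is nonempty so that the minimum is meaningful. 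The only delicate point is the bookkeeping in the antisymmetry step: one must track correctly how the permutation of arguments translates into a permutation of tensor indices (in particular the appearance of $\pi^{-1}$ versus $\pi$) before applying linear independence, since a careless reindexing would produce the wrong sign convention. Everything else is a direct consequence of Theorem \ref{the:general_main_result} and Lemma \ref{lemma:linear_indep}.
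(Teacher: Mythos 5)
Your proposal is correct and follows essentially the same route as the paper: both pass through Theorem \ref{the:general_main_result} to identify $\rank_{\calF_K(\Omega)}(f)$ with $\rank_{\cp}(\bX)$ for the coefficient tensor $\bX$ of $f$ in the product basis, and both use the linear independence from Lemma \ref{lemma:linear_indep} to transfer the antisymmetry and nonvanishing of $f$ to $\bX$, making it a feasible point of the minimization. The only cosmetic difference is that you compare coefficients under each individual permutation relation \eqref{eq:antisym_perm}, whereas the paper compares the expansion of $f$ with that of $\scrA[f]$; these are equivalent, and your index bookkeeping (the $\pi^{-1}$ versus $\pi$ issue you flag) is handled correctly.
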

\begin{proof}
	Assume $f$ has the expansion as in Eq. \eqref{eq:TPF finite expansion}, and define the corresponding tensor $\bX$ as in Eq. \eqref{eq:tensor_cp_form}. Applying the antisymmetrizer to $f$, we obtain
	\begin{align}
		\scrA[f](\rr) &= \dfrac{1}{N!}\sum_{\tau\in \calS_N}\sgn(\tau)\sum_{i=1}^p\prod_{j=1}^N\sum_{k=1}^Kc_{ijk}\bfun_k(\rr_{\tau(j)})\nonumber\\
		& (\text{Let } \pi = \tau^{-1}, \ell=\tau(j), \text{ and thus } j=\pi(\ell) )\nonumber\\
		&=\dfrac{1}{N!}\sum_{\pi\in \calS_N}\sgn(\pi)\sum_{i=1}^p\prod_{\ell=1}^N \sum_{k=1}^Kc_{i\pi(\ell)k}\bfun_k(\rr_\ell)\nonumber\\
		&=\dfrac{1}{N!}\sum_{\pi \in \calS_N}\sgn(\pi) \sum_{1\le k_1,\dots,k_N\le K}\lrbracket{\sum_{i=1}^p\prod_{j=1}^Nc_{i\pi(j)k_j}}\bfun_{k_1}(\rr_1)\cdots\bfun_{k_N}(\rr_N)\nonumber\\
		&= \sum_{1\le k_1,\dots,k_N\le K} \lrbracket{ \dfrac{1}{N!}\sum_{\pi \in \calS_N}\sgn(\pi) \lrbracket{\sum_{i=1}^p\prod_{j=1}^Nc_{i\pi(j)k_j}} }\bfun_{k_1}(\rr_1)\cdots\bfun_{k_N}(\rr_N)\nonumber\\
		& (\text{Let } \tau = \pi^{-1}, \ell=\pi(j), \text{ and thus } j=\tau(\ell) )\nonumber\\
		&= \sum_{1\le k_1,\dots,k_N\le K} \lrbracket{ \dfrac{1}{N!}\sum_{\tau \in \calS_N}\sgn(\tau) \lrbracket{\sum_{i=1}^p\prod_{\ell=1}^Nc_{i\ell k_{\tau(\ell)}}} }\bfun_{k_1}(\rr_1)\cdots\bfun_{k_N}(\rr_N) \nonumber\\
		&=\sum_{1\le k_1,\dots,k_N\le K} \lrbracket{ \dfrac{1}{N!}\sum_{\tau \in \calS_N}\sgn(\tau) \bX(k_{\tau(1)},\dots,k_{\tau(N)}) }\bfun_{k_1}(\rr_1)\cdots\bfun_{k_N}(\rr_N) \label{eq:Antisym TPF finite expansion}
	\end{align}
	for any $\rr\in\Omega^N$.
	Since $f$ is antisymmetric, we have $f = \scrA[f]$. Noticing that $\lrbrace{\bfun_k}_{k=1}^K$ are linearly independent, we conclude after comparing Eqs. \eqref{eq:TPF finite expansion} and \eqref{eq:Antisym TPF finite expansion} and using Lemma \ref{lemma:linear_indep} that the tensor $\bX$ satisfies
	\begin{equation}
		\label{eq:expand_relation}
		\begin{cases}
			\bX(k_{1},\dots,k_{N}) = \dfrac{1}{N!}\displaystyle\sum_{\tau\in \calS_N}\sgn(\tau)\bX(k_{\tau(1)},\dots,k_{\tau(N)}),\ \forall~1\le k_1,\dots,k_N \le K, \\
			\displaystyle\exists\ k_1,\dots,k_N,\ \text{s. t. }\bX(k_{1},\dots,k_{N}) \neq 0.
		\end{cases}
	\end{equation}
	Thus, Eq. \eqref{eq:expand_relation} implies that $\bX\in \calA(\productspace{N}{K})$ and $\bX\neq 0$. By Theorem \ref{the:general_main_result}, it follows that
	$$\rank_{\calF_K(\Omega)}(f)=\rank_{\cp}(\bX) \ge \min_{\bY \in \calA(\productspace{N}{K})\backslash \lrbrace{\boldsymbol{0}}}\rank_{\cp}(\bY),$$
	which completes the proof.
\end{proof}
\begin{remark}
	The condition $K\ge N$ is necessary to avoid the trivial case. If $K <N$, by the basic property of antisymmetric tensors (see Proposition \ref{prop:properties of antisym tensors}), it must hold that $f\equiv 0$.
\end{remark}
\par This corollary establishes that the TPF rank of any $f\in\calA(\bigotimes^N\calF_K(\Omega))\backslash\lrbrace{0}$ is not smaller than the lowest CP rank of nonzero tensors in $\calA(\bigotimes^N\C^K)$. To estimate this lowest CP rank, we turn to analyze the CP ranks of the basis tensors $\{\bE_{\bk}\}_{\bk\in\Lambda}$, as suggested by Eq. \eqref{eq:anti_tensor_expand}. For this purpose, we introduce an existing lower bound of $\rank_{\cp}(\bE)$, where $\bE$ is the determinant tensor in Eq. \eqref{eq:det_tensor}.
\begin{lem}[\cite{derksen2016nuclear}]
	\label{lemma:det_rank}
	Let $\bE\in \calA(\productspace{N}{N})$ be the determinant tensor defined in Eq. \eqref{eq:det_tensor}. Then
	$$ \displaystyle\binom{N}{\lfloor \frac{N}{2} \rfloor} \le \rank_{\cp} (\bE) \le 
	N!\cdot \lrbracket{\dfrac{5}{6}}^{\lfloor \frac{N}{3} \rfloor}.$$
\end{lem}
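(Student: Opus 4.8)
The plan is to treat the two bounds separately, as they rest on unrelated ideas: the lower bound comes from a balanced matrix flattening of $\bE$, and the upper bound from a block Laplace expansion whose efficiency is driven by the single small case $N=3$. Throughout, let $\bE_m$ denote the order-$m$ determinant tensor on $\C^m$ (so the tensor $\bE$ of Eq.~\eqref{eq:det_tensor} is $\bE_N$), whose entries are the Levi-Civita symbols $\epsilon_{i_1\cdots i_m}$.

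First I would establish the lower bound using the standard fact that the rank of any matricization of a tensor is at most its CP rank (a rank-one tensor flattens to a rank-one matrix). Split the $N$ positions into a first group $S$ of size $k=\lfloor N/2\rfloor$ and its complement $S^c$, and form the flattening $M\in\C^{N^{k}\times N^{N-k}}$ of $\bE$. An entry of $M$ is nonzero exactly when $(i_1,\dots,i_N)$ is a permutation of $\{1,\dots,N\}$, i.e.\ when the row indices form a $k$-subset $A\subseteq\{1,\dots,N\}$ and the column indices form $A^c$. The Laplace rule $\epsilon_{i_1\cdots i_N}=\sgn(A,A^c)\,\epsilon^{(A)}\epsilon^{(A^c)}$ then gives
\[
M=\sum_{\abs{A}=k}\sgn(A,A^c)\,\bu_A\w_A^{\top},
\]
where $\bu_A$ (resp.\ $\w_A$) is the vectorized alternating tensor on $S$ (resp.\ $S^c$) supported on the permutations of $A$ (resp.\ $A^c$). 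The decisive observation is that the $\{\bu_A\}$ have pairwise disjoint supports, and likewise the $\{\w_A\}$, so the rank-one blocks occupy disjoint rows and columns and $\rank(M)=\binom{N}{\lfloor N/2\rfloor}$ exactly, whence $\rank_{\cp}(\bE)\ge\binom{N}{\lfloor N/2\rfloor}$.

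For the upper bound I would combine subadditivity of CP rank over sums with submultiplicativity over tensor products on disjoint index groups. Partition the $N$ positions into consecutive blocks $B_1,\dots,B_t$ of sizes $n_1,\dots,n_t$ and read the Laplace expansion over all blocks at once:
\[
\bE=\sum_{\substack{A_1\sqcup\cdots\sqcup A_t=\{1,\dots,N\}\\ \abs{A_s}=n_s\ \forall s}}\sgn(A_1,\dots,A_t)\,\bigotimes_{s=1}^{t}\bE_{A_s}^{(B_s)},
\]
where $\bE_{A_s}^{(B_s)}$ acts on the positions of $B_s$ over the value alphabet $A_s$. There are $N!/(n_1!\cdots n_t!)$ terms, and each factor is, up to relabeling of basis vectors and zero-padding into $\C^N$, a copy of $\bE_{n_s}$ and hence has CP rank $\rank_{\cp}(\bE_{n_s})$. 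Subadditivity and submultiplicativity then yield $\rank_{\cp}(\bE)\le \tfrac{N!}{n_1!\cdots n_t!}\prod_{s}\rank_{\cp}(\bE_{n_s})$. Taking $\lfloor N/3\rfloor$ blocks of size $3$ and one remainder block of size $r=N-3\lfloor N/3\rfloor\in\{0,1,2\}$, and using $\rank_{\cp}(\bE_r)\le r!$, this collapses to
\[
\rank_{\cp}(\bE)\le\frac{N!}{6^{\lfloor N/3\rfloor}\,r!}\cdot 5^{\lfloor N/3\rfloor}\cdot r!=N!\left(\frac{5}{6}\right)^{\lfloor N/3\rfloor},
\]
provided $\rank_{\cp}(\bE_3)\le 5$.

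The hard part will be the seed estimate $\rank_{\cp}(\bE_3)\le 5$: one must exhibit an explicit rank-five CP decomposition of the $3\times3\times3$ alternating (Levi-Civita) tensor over $\C$, beating the trivial count $3!=6$ that drives the whole recursion. Once this is in hand, the only remaining care is bookkeeping the shuffle signs and verifying that the multiblock Laplace expansion has exactly the multinomial number of terms with each factor a genuine determinant tensor; the flattening lower bound, by contrast, is routine as soon as the disjoint-support block structure of $M$ is recognized.
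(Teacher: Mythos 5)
The paper does not actually prove this lemma: it is imported verbatim from \cite{derksen2016nuclear}, so there is no in-paper argument to compare against. Your outline is, in substance, a faithful reconstruction of the proof in that reference, and both halves are sound as described. The lower bound via the balanced flattening is complete: the rank of any matricization bounds the CP rank from below, and the block structure you identify (rows grouped by the $k$-subset $A$ of values they use, columns by $A^c$, with supports disjoint across distinct $A$) does give $\rank(M)=\binom{N}{\lfloor N/2\rfloor}$ exactly. The upper bound via the multiblock Laplace expansion, subadditivity of CP rank over sums, and submultiplicativity over tensor products on disjoint mode groups is likewise the right mechanism, and your bookkeeping $\tfrac{N!}{n_1!\cdots n_t!}\prod_s\rank_{\cp}(\bE_{n_s})$ collapses correctly to $N!\,(5/6)^{\lfloor N/3\rfloor}$ when all but the remainder block have size three.

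The one genuine gap is the seed estimate $\rank_{\cp}(\bE_3)\le 5$, on which the entire upper bound hinges and which you flag but do not establish; without it the recursion only reproduces the trivial bound $\rank_{\cp}(\bE)\le N!$. This is not a routine omission: one must exhibit an explicit rank-five CP decomposition of the $3\times3\times3$ Levi-Civita tensor over $\C$, which is a concrete but somewhat delicate construction (it is given in \cite{derksen2016nuclear}; that five is also optimal is addressed in \cite{ilten2016product}), and no soft counting argument produces it. If you supply or cite that decomposition, your argument is complete and coincides with the source's proof.
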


\par The antisymmetric basis tensors $\{\bE_{\bk}\}_{\bk\in\Lambda}$ defined in Eq. \eqref{eq:basis_tensor_product} generalize the determinant tensor $\bE$ to higher dimensions. We now demonstrate that their CP ranks are identical.
\begin{The}
	\label{the:rank_eq}
	Assume that $K\ge N$. Let $\Lambda$ be the multi-index set defined in Eq. \eqref{eq:index_set}. For any multi-index $\bk \in \Lambda$, 
	$$\rank_{\cp}(\bE_{\bk})=\rank_{\cp}(\bE).$$
\end{The}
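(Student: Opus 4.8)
The plan is to show that both tensors $\bE_{\bk}$ and $\bE$ are related by a structural embedding that preserves CP rank. The key observation is that the determinant tensor $\bE\in\calA(\productspace{N}{N})$ is the ``densest'' antisymmetric basis tensor, supported on all permutations of $\{1,\ldots,N\}$, whereas $\bE_{\bk}$ for a general $\bk=(k_1,\ldots,k_N)\in\Lambda$ lives in the larger space $\productspace{N}{K}$ but is supported only on permutations of $\{k_1,\ldots,k_N\}$. Intuitively, $\bE_{\bk}$ is nothing but $\bE$ ``planted'' into the coordinate subspace indexed by $\{k_1,\ldots,k_N\}$, with all other coordinates being irrelevant. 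I would therefore prove the two inequalities $\rank_{\cp}(\bE_{\bk})\le\rank_{\cp}(\bE)$ and $\rank_{\cp}(\bE_{\bk})\ge\rank_{\cp}(\bE)$ separately, each via a rank-preserving linear map.

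\textbf{Upper bound.} First I would fix $\bk\in\Lambda$ and consider the coordinate inclusion map $\iota:\C^N\to\C^K$ sending the $m$-th standard basis vector $\be_m\in\C^N$ to $\be_{k_m}\in\C^K$. This is a linear injection. Taking an optimal CP decomposition $\bE=\sum_{i=1}^r\bx_{i1}\otimes\cdots\otimes\bx_{iN}$ with $r=\rank_{\cp}(\bE)$, I would apply $\iota$ coordinatewise to obtain $\sum_{i=1}^r(\iota\bx_{i1})\otimes\cdots\otimes(\iota\bx_{iN})$. By comparing supports via Eq.~\eqref{eq:basis_anti_per} and the permutation structure in Eq.~\eqref{eq:basis_tensor_product}, this image equals $\bE_{\bk}$ exactly, and since it is written as a sum of $r$ rank-one terms, $\rank_{\cp}(\bE_{\bk})\le r=\rank_{\cp}(\bE)$.

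\textbf{Lower bound.} For the reverse, I would use the complementary projection $P:\C^K\to\C^N$ defined by keeping only the coordinates indexed by $k_1,\ldots,k_N$ (i.e.\ $P\be_{k_m}=\be_m$ and $P\be_\ell=0$ for $\ell\notin\{k_1,\ldots,k_N\}$). Applying $P$ coordinatewise to an optimal CP decomposition $\bE_{\bk}=\sum_{i=1}^{r'}\by_{i1}\otimes\cdots\otimes\by_{iN}$ yields a decomposition of $P^{\otimes N}[\bE_{\bk}]$ into at most $r'$ rank-one terms. The crucial point, again verified through Eq.~\eqref{eq:basis_anti_per}, is that $P^{\otimes N}[\bE_{\bk}]=\bE$, because the support of $\bE_{\bk}$ consists solely of permutations of $\{k_1,\ldots,k_N\}$, all of which survive under $P$ with the correct signs, while any zero entries are unaffected. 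Hence $\rank_{\cp}(\bE)\le r'=\rank_{\cp}(\bE_{\bk})$. Combining the two bounds gives equality.

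\textbf{The main obstacle} I anticipate is not conceptual but bookkeeping: one must carefully verify that the coordinatewise application of $\iota$ (resp.\ $P$) sends the specific antisymmetric combination defining $\bE$ to exactly $\bE_{\bk}$ (resp.\ vice versa), keeping track of signs of permutations and ensuring no spurious cross-terms arise from off-support coordinates. The cleanest way to handle this is to work directly from the explicit entry formula in Eq.~\eqref{eq:basis_anti_per} rather than from the CP form: since both $\iota$ and $P$ act as linear maps on each tensor factor, the induced map $\iota^{\otimes N}$ (resp.\ $P^{\otimes N}$) is linear on the whole tensor space and preserves rank-one structure, so it suffices to check the identity $\iota^{\otimes N}[\bE]=\bE_{\bk}$ and $P^{\otimes N}[\bE_{\bk}]=\bE$ at the level of tensor entries, which follows immediately because both $\iota$ and $P$ act as a relabeling of the relevant indices that commutes with the permutation sum.
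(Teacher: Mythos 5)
Your proposal is correct and is essentially the paper's own argument: the inclusion $\iota$ is exactly the paper's zero-padding of the CP factors of $\bE$ into $\C^K$ along the coordinates $k_1,\ldots,k_N$, and the projection $P$ is exactly the paper's restriction of the CP factors of $\bE_{\bk}$ to those coordinates, each applied factor-wise to an optimal decomposition to get the two inequalities. Your observation that $\iota^{\otimes N}[\bE]=\bE_{\bk}$ and $P^{\otimes N}[\bE_{\bk}]=\bE$ can be read off directly from the defining permutation sum in Eq.~\eqref{eq:basis_tensor_product} is a slightly tidier way to organize the verification than the paper's case analysis, but the substance is identical.
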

\begin{proof}
	Let $\bk:=\lrbracket{k_1,\dots,k_N}$, $p:=\rank_{\cp}(\bE)$ and $p_{\bk}:=\rank_{\cp}(\bE_{\bk})$. By the definition of CP rank, there exist vectors $\ba_{ij}\in \C^N$, $j=1,\dots,N$, $i=1,\dots,p$, such that
	$$\bE = \sum_{i=1}^p \ba_{i1}\otimes \cdots \otimes \ba_{iN}.$$
	We extend each $\ba_{ij}$ to $\C^K$ by adding zero entries and denote the resulting vector by $\tilde{\ba}_{ij}\in\C^K$, which satisfies that the $\ell$-th entry in $\ba_{ij}$ is the $k_\ell$-th entry in $\tilde{\ba}_{ij}$. Based on $\{\tilde\ba_{ij}\}$, we define 
	\begin{equation}
		\bB_{\bk} := \sum_{i=1}^p \tilde{\ba}_{i1}\otimes \cdots \otimes \tilde{\ba}_{iN}\in\bigotimes^N\C^K.
		\label{eq:tmp tensor}
	\end{equation}
	
	\par In the following, we discuss the value of $\bB_{\bk}(\ell_1,\ldots,\ell_N)$ with $1\le\ell_1,\ldots,\ell_N\le K$ and will finally end up with $\bB_{\bk}=\bE_{\bk}$.\\[0.2cm]
	\noindent\textbf{Case I.} There exists an $\ell_j \not\in\lrbrace{k_1,\dots,k_N}$. By the definition of $\tilde{\ba}_{ij}$, we have $\bB_{\bk}(\ell_1,\dots,\ell_N)=0$.\\[0.2cm]
	\noindent\textbf{Case II.} For any $j\in\{1,\ldots,N\}$, $\ell_j\in\lrbrace{k_1,\dots,k_N}$.
	\begin{itemize}
		\item If there exist $j\ne m$ such that $\ell_j = \ell_m$, by the definition of $\tilde{\ba}_{ij}$ and $\bE$, $\bB_{\bk}(\ell_1,\dots,\ell_N)=0$.
		
		\item If $\ell_j\neq \ell_m$ for any $j\neq m$, then there exists a permutation $\pi\in\calS_N$ such that $\ell_j = k_{\pi(j)}$ for $j=1,\ldots,N$. We thus have
		$$\bB_{\bk}(\ell_1,\dots,\ell_N)=\bE(\pi(1),\dots,\pi(N)).$$ 
		By the definition of $\bE$, if $\pi$ is even, $\bB_{\bk}(\ell_1,\dots,\ell_N)=1$; otherwise, $\bB_{\bk}(\ell_1,\dots,\ell_N)=-1$.
	\end{itemize}
	By the above two cases and the definition of $\bE_{\bk}$ in Eq. \eqref{eq:basis_anti_per}, we have $\bE_{\bk}=\bB_{\bk}$. From Eq. \eqref{eq:tmp tensor} and the definition of CP rank, $p_{\bk}\le p$.
	
	\par Conversely, there exist vectors $\tilde{\bb}_{ij}\in \C^K$, $j=1,\dots,N$, $i=1,\dots,p$, such that
	$$\bE_{\bk} = \sum_{i=1}^{p_{\bk}} \tilde{\bb}_{i1}\otimes \cdots \otimes \tilde{\bb}_{iN}.$$
	Let $\bb_{ij}\in\C^N$ be the restriction of $\tilde{\bb}_{ij}$ to the entries indexed by $\bk$, that is, the $\ell$-th entry of $\bb_{ij}$ is the $k_{\ell}$-th entry of $\tilde{\bb}_{ij}$ ($\ell=1,\ldots,N$). Similar arguments then yield
	$$\bE = \sum_{i=1}^{p_{\bk}} \bb_{i1}\otimes \cdots \otimes\bb_{iN},$$
	implying $p \le p_{\bk}$. Therefore, $p = p_{\bk}$ as desired.
\end{proof}

\par With the above theorem, we obtain the following inequalities for the CP rank of any nonzero antisymmetric tensor.
\begin{cor}
	\label{cor:bound_rank}
	Assume that $K\ge N$. Let $\bX \in \calA\lrbracket{\productspace{N}{K}}\backslash\lrbrace{\boldsymbol{0}}$. Then
	$$\displaystyle\binom{N}{\lfloor \frac{N}{2} \rfloor} \le \rank_{\cp}(\bX)\le N!\cdot\displaystyle\binom{K}{N}\cdot \lrbracket{\dfrac{5}{6}}^{\lfloor \frac{N}{3} \rfloor}.$$
\end{cor}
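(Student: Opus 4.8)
The plan is to treat the two inequalities separately, leaning on the basis expansion in Eq.~\eqref{eq:anti_tensor_expand}, Theorem~\ref{the:rank_eq}, and Lemma~\ref{lemma:det_rank}. The upper bound is the routine direction; the lower bound is where the real work lies.

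For the upper bound, I would write $\bX = \sum_{\bk \in \Lambda} c_{\bk}\bE_{\bk}$ and invoke the subadditivity of the CP rank together with its invariance under nonzero scaling. This gives $\rank_{\cp}(\bX) \le \sum_{\bk:\,c_{\bk}\neq 0}\rank_{\cp}(\bE_{\bk}) \le \abs{\Lambda}\cdot\rank_{\cp}(\bE)$. Since $\abs{\Lambda}=\binom{K}{N}$ by Proposition~\ref{prop:properties of antisym tensors} and $\rank_{\cp}(\bE_{\bk})=\rank_{\cp}(\bE)$ for every $\bk\in\Lambda$ by Theorem~\ref{the:rank_eq}, substituting the estimate $\rank_{\cp}(\bE)\le N!\,(5/6)^{\lfloor N/3\rfloor}$ from Lemma~\ref{lemma:det_rank} closes this direction.

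For the lower bound I would avoid any term-by-term comparison (the CP rank of a sum can be far below that of its summands) and instead use a restriction argument to reduce a general nonzero antisymmetric tensor to a single determinant tensor. Since $\bX\neq \boldsymbol{0}$, there is a sorted multi-index $\bk=(k_1<\cdots<k_N)\in\Lambda$ with $c_{\bk}=\bX(k_1,\ldots,k_N)\neq 0$. I would form the subtensor $\bY\in\productspace{N}{N}$ by restricting every mode of $\bX$ to the coordinate set $\{k_1,\ldots,k_N\}$ and reindexing $k_\ell\mapsto\ell$, so that $\bY(\ell_1,\ldots,\ell_N)=\bX(k_{\ell_1},\ldots,k_{\ell_N})$. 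Using the antisymmetry of $\bX$ and the vanishing of entries with repeated indices (Proposition~\ref{prop:properties of antisym tensors}), I would check entrywise that $\bY(\pi(1),\ldots,\pi(N))=\sgn(\pi)\,c_{\bk}$ for every $\pi\in\calS_N$ and $\bY=0$ otherwise; that is, $\bY=c_{\bk}\bE$ with $\bE$ the determinant tensor of Eq.~\eqref{eq:det_tensor}.

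The crux is then the monotonicity of the CP rank under this restriction: selecting a subset of coordinates in each mode is a multilinear contraction with selection matrices, which carries each rank-one term to a rank-one term (or to zero), so $\rank_{\cp}(\bY)\le\rank_{\cp}(\bX)$. Combining this with $\rank_{\cp}(\bY)=\rank_{\cp}(c_{\bk}\bE)=\rank_{\cp}(\bE)$ (scaling invariance, as $c_{\bk}\neq 0$) and the lower bound $\rank_{\cp}(\bE)\ge\binom{N}{\lfloor N/2\rfloor}$ from Lemma~\ref{lemma:det_rank} yields $\rank_{\cp}(\bX)\ge\binom{N}{\lfloor N/2\rfloor}$. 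I expect the main obstacle to be articulating cleanly that the restricted tensor is exactly the scaled determinant tensor and that the CP rank cannot grow under the restriction; both facts are elementary but must be stated with care, since the entire lower bound rests on this reduction to the one tensor whose rank is already pinned down.
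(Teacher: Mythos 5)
Your proposal is correct and follows essentially the same route as the paper: the upper bound via the basis expansion, subadditivity, Theorem~\ref{the:rank_eq}, and Lemma~\ref{lemma:det_rank}; the lower bound via truncating $\bX$ to the support of one basis tensor with nonzero coefficient and observing that this restriction acts term-by-term on a CP decomposition, hence cannot increase the rank. The only cosmetic difference is that you restrict directly to an $N$-dimensional subtensor equal to $c_{\bk}\bE$, whereas the paper truncates within $\productspace{N}{K}$ to obtain $c_{\bk}\bE_{\bk}$ and then invokes Theorem~\ref{the:rank_eq}.
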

\begin{proof}
	By using the expansion of the antisymmetric tensor $\bX$ in Eq. \eqref{eq:anti_tensor_expand}, the definition of CP rank, Lemma \ref{lemma:det_rank} and Theorem \ref{the:rank_eq}, we obtain
	$$\rank_{\cp}(\bX)\le\abs{\Lambda}\cdot\rank_{\cp}(\bE_{\bk})\le N!\cdot\displaystyle\binom{K}{N}\cdot \lrbracket{\dfrac{5}{6}}^{\lfloor \frac{N}{3} \rfloor}.$$
	
	\par Now consider the other side. Choose $\boldsymbol{u}:=\lrbracket{k_1,\dots,k_N}\in \Lambda$ that satisfies $c_{\boldsymbol{u}}:=\bX(k_1,\dots,k_N)\neq 0$. Define the truncated tensor $\tilde{\bX}$ of $\bX$ by
		$$\tilde{\bX}(\ell_1,\dots,\ell_N):=\begin{cases}
			\bX(\ell_1,\dots,\ell_N), & \text{ if } \ell_i\in \lrbrace{k_1,\dots,k_N},~\text{for}~1\le i\le N,\\
			0, & \text{ otherwise.}
		\end{cases}$$
	It is straightforward to verify from Eq. \eqref{eq:basis_anti_per} that
	$$\supp(\tilde\bX)=\supp(\bE_{\bu})\quad\text{and}\quad\supp(\bE_{\bu'})\cap\supp(\bE_{\bu''})=\emptyset,\quad\forall~\bu',\bu''\in\Lambda, \bu'\neq \bu''.$$
	Therefore, from the expansion in Eq. \eqref{eq:anti_tensor_expand}, we see that $\tilde{\bX} = c_{\boldsymbol{u}}\bE_{\boldsymbol{u}}$. Let $p:=\rank_{\cp}(\bX)$. The CP decomposition of $\bX$ then takes the form
	$$\bX=\sum_{i=1}^p \bx_{i1}\otimes \cdots \otimes \bx_{iN},$$
	where $\bx_{ij}\in \C^K$ for $i=1,\ldots,p$ and $j=1,\ldots,N$. By using similar arguments as in the proof of Theorem \ref{the:rank_eq}, we have that
	$$\tilde{\bX}=\sum_{i=1}^p \tilde{\bx}_{i1}\otimes \cdots \otimes \tilde{\bx}_{iN},$$
	where $\tilde{\bx}_{ij}\in \C^K$ shares the $(k_1,\dots,k_N)$-th entry with \(\bx_{ij}\) while letting others be zero.
	By the definition of CP rank, Lemma \ref{lemma:det_rank}, and Theorem \ref{the:rank_eq}, it follows that
	$$p=\rank_{\cp}(\bX)\ge \rank_{\cp}(\tilde{\bX})=\rank_{\cp}(\bE_{\boldsymbol{u}})\ge \displaystyle\binom{N}{\lfloor \frac{N}{2} \rfloor}.$$
	The proof is complete.
\end{proof}

\par Notice that the minimum CP rank of any antisymmetric tensor depends only on $N$. Thus, after taking the minimum over $\calA(\productspace{N}{K})\backslash\lrbrace{\boldsymbol{0}}$, the lower bound remains unchanged. Combining Corollary \ref{cor:main_result} and Corollary \ref{cor:bound_rank} and applying the Stirling approximation $N!=\Theta(\sqrt{2\pi N}(N/e)^N)$, we obtain the following result.
\begin{cor}
	\label{cor:lower_bound_p}
	Assume that $K \ge N$ and $f\in\calA(\bigotimes^N\calF_K(\Omega))\backslash\lrbrace{0}$. Then the TPF rank of $f$ satisfies
	$$\rank_{\calF_K(\Omega)}(f) \ge \Theta\lrbracket{\dfrac{2^N}{\sqrt{N}}}.$$
\end{cor}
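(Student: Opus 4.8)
The plan is to combine the two preceding corollaries and then extract the asymptotic order via Stirling's formula; no new machinery is needed. First I would invoke Corollary \ref{cor:main_result}, which already bounds the TPF rank of any nonzero antisymmetric $f$ from below by the smallest CP rank attained on $\calA(\productspace{N}{K})\setminus\{\boldsymbol{0}\}$. The next step is to observe that Corollary \ref{cor:bound_rank} supplies the uniform lower bound $\rank_{\cp}(\bX)\ge\binom{N}{\lfloor N/2\rfloor}$ for every nonzero antisymmetric tensor $\bX$. Since this bound depends only on $N$ and not on the particular tensor, taking the minimum over the set loses nothing, and we obtain
$$\rank_{\calF_K(\Omega)}(f)\ge\min_{\bX\in\calA(\productspace{N}{K})\setminus\{\boldsymbol{0}\}}\rank_{\cp}(\bX)\ge\binom{N}{\lfloor N/2\rfloor}.$$

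It then remains only to estimate the central binomial coefficient. I would apply the Stirling approximation $N!=\Theta(\sqrt{2\pi N}(N/e)^N)$ termwise. For even $N=2m$ this gives $\binom{2m}{m}=(2m)!/(m!)^2=\Theta(4^m/\sqrt{m})=\Theta(2^N/\sqrt{N})$, and the odd case $N=2m+1$ is handled identically by writing $\binom{N}{\lfloor N/2\rfloor}=\binom{2m+1}{m}=(2m+1)!/(m!\,(m+1)!)$ and cancelling factorials, which yields the same order. Substituting this estimate into the displayed inequality finishes the proof.

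The only point demanding care — and thus the closest thing to an obstacle — is the asymptotic estimate of $\binom{N}{\lfloor N/2\rfloor}$: one must track the parity of $N$ so that the floor is treated correctly, and confirm that the polynomial prefactors arising from Stirling collapse to a single $\sqrt{N}$ in the denominator rather than a stray constant or an extra power. Everything else is a direct citation of results already established in the excerpt, so this corollary is essentially an assembly step rather than a genuinely new argument.
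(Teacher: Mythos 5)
Your proposal is correct and follows exactly the paper's own route: combine Corollary \ref{cor:main_result} with the lower bound $\binom{N}{\lfloor N/2\rfloor}$ from Corollary \ref{cor:bound_rank} (noting the bound depends only on $N$, so the minimum over nonzero antisymmetric tensors is unaffected), then apply Stirling's approximation to get $\Theta(2^N/\sqrt{N})$. Your explicit parity check for the central binomial coefficient is a detail the paper leaves implicit, but the argument is the same.
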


Corollary \ref{cor:lower_bound_p} demonstrates that the TPF rank of any antisymmetric TPF in a finite-dimensional space grows at least exponentially with the dimension. Moreover, this exponential lower bound is independent of the basis size $K\ge N$. This independence originates from the intrinsic structure of the determinant tensor in Eq.~\eqref{eq:det_tensor}. For $K<N$, Proposition \ref{prop:properties of antisym tensors} implies that the only antisymmetric TPF is the trivial one $f\equiv0$.
\begin{remark}
	In a recent work \cite{huang2023backflow}, the authors take the so-called backflow ans\"atze--a generalized Slater determinant--as the basis function and show that, in the worst case, approximating a nonzero antisymmetric polynomial may require exponentially many determinants. In comparison, our work takes the TPF as the basis function and adopts a best-case perspective: We establish that even the \textit{minimum} number of terms in representing a nonzero antisymmetric function grows exponentially with the problem dimension.
\end{remark}

\subsection{Representation Complexity of Antisymmetric Tensor Neural Networks}

\par Consider the TNN defined in Eq. \eqref{eq:tnn_wf}. We begin with the cases where each subnetwork has only one hidden layer with a width of $m\in\N$. In this case, each univariate function takes the form
\begin{equation}
	\label{eq:tnn_psi}
	\psi_{ijk}(r_{jk};\btheta_{jk})=\sum_{\ell=1}^m a_{ijk\ell}\cdot\sigma(\omega_{jk\ell}r_{jk}+b_{jk\ell}) +c_{ijk},
\end{equation}
where $\btheta_{jk}$ collects $\{a_{ijk\ell}\}_{i\ell}$, $\{c_{ijk}\}_i$, $\{\omega_{jk\ell}\}_\ell$, $\{b_{jk\ell}\}_\ell\subseteq\C$, and $\sigma(\cdot)$ is an activation function. We aim to analyze the TPF rank of antisymmetric TNNs.

\par To apply the theoretical results in the previous subsection, we proceed to formulate the TNNs into TPFs in finite-dimensional spaces.
To this end, for any $i\in\{1,\ldots,p\}$ and $j\in\{1,\ldots,N\}$, define 
$$\psi_{ij}(\rr_j;\btheta_j):=\prod_{k=1}^d\psi_{ijk}(r_{jk};\btheta_{jk}),\quad\forall~\rr_j=\lrbracket{r_{j1},\dots,r_{jd}}^{\top}\in\Omega,$$
where $\btheta_j:=\lrbracket{\btheta_{j1},\dots,\btheta_{jd}}^{\top}$. Recalling the definition of $\psi_{ijk}$ in Eq. \eqref{eq:tnn_psi}, one can thus verify that $\psi_{ij}(\cdot;\btheta_j)\in\tilde\calF_{N,m,1}(\Omega):=\mathrm{span}\lrbrace{\tilde\bfun_{t,\calI,\bell}}$, where
$$\tilde\bfun_{t,\calI,\bell}(\hat\rr):=\prod_{k\in\calI}\sigma(\omega_{tk\ell_k}\hat r_k+b_{tk\ell_k}),\quad\forall~\hat\rr=\lrbracket{\hat r_{1},\dots,\hat r_{d}}^{\top}\in\Omega,$$
for $t\in\{1,\ldots,N\}$, $\calI\subseteq\{1,\ldots,d\}$, $\bell=(\ell_k)_{k\in\calI}$ with each $\ell_k\in\{1,\ldots,m\}$. Note that when $\calI=\emptyset$, the function is defined to be identically equal to 1.
Consequently, the TNN $f_{\btheta}\in\bigotimes^N \tilde\calF_{N,m,1}(\Omega)$. Although the functions $\{\tilde\bfun_{t,\calI,\bell}\}$ are not necessarily linearly independent, the space $\tilde\calF_{N,m,1}(\Omega)$ is finite-dimensional for fixed parameters.

\par Using Corollary \ref{cor:lower_bound_p}, we obtain the following bound for any antisymmetric TNN with one hidden layer and a width of $m$.
\begin{cor}
	\label{cor:one_hidden_tnn}
	For an non-zero antisymmetric TNN with one hidden layer, a width of $m$, and any activation function, for any network parameters, its TPF rank $p$ satisfies
	$$p \ge \Theta(2^N/\sqrt{N}).$$
\end{cor}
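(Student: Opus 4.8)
The plan is to treat this as a direct corollary of the finite-dimensional result in Corollary \ref{cor:lower_bound_p}, since the construction immediately preceding the statement has already done the essential work. That construction shows that, for a single hidden layer of width $m$, every factor $\psi_{ij}(\cdot;\btheta_j)$ lies in the finite-dimensional space $\tilde\calF_{N,m,1}(\Omega)=\mathrm{span}\{\tilde\bfun_{t,\calI,\bell}\}$, so the nonzero antisymmetric TNN satisfies $f_{\btheta}\in\bigotimes^N\tilde\calF_{N,m,1}(\Omega)$. The only mismatch with the hypotheses of Corollary \ref{cor:lower_bound_p} is that the spanning set $\{\tilde\bfun_{t,\calI,\bell}\}$ is, as noted, not guaranteed to be linearly independent, whereas Corollary \ref{cor:lower_bound_p} is phrased for a space $\calF_K(\Omega)$ carrying a linearly independent basis.

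First I would remove this mismatch by extracting from $\{\tilde\bfun_{t,\calI,\bell}\}$ a maximal linearly independent subset $\{\bfun_1,\dots,\bfun_K\}$, where $K:=\dim\tilde\calF_{N,m,1}(\Omega)<\infty$, and setting $\calF_K(\Omega):=\tilde\calF_{N,m,1}(\Omega)=\mathrm{span}\{\bfun_1,\dots,\bfun_K\}$; then $f_{\btheta}\in\calA(\bigotimes^N\calF_K(\Omega))\backslash\{0\}$ by the assumed nontriviality and antisymmetry. Next I would check the hypothesis $K\ge N$, which is automatic here: by the mechanism of Corollary \ref{cor:main_result}, the nonzero antisymmetric $f_{\btheta}$ induces a nonzero tensor in $\calA(\productspace{N}{K})$, so $\calA(\productspace{N}{K})\neq\{\boldsymbol{0}\}$, and Proposition \ref{prop:properties of antisym tensors} then forces $K\ge N$. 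Finally I would connect the separation rank $p$ to the TPF rank: the defining expansion $f_{\btheta}=\sum_{i=1}^p\bigotimes_{j=1}^N\psi_{ij}(\cdot;\btheta_j)$ exhibits $f_{\btheta}$ as a sum of $p$ tensor product terms with factors in $\calF_K(\Omega)$, whence $\rank_{\calF_K(\Omega)}(f_{\btheta})\le p$; combining this with the bound $\rank_{\calF_K(\Omega)}(f_{\btheta})\ge\Theta(2^N/\sqrt{N})$ from Corollary \ref{cor:lower_bound_p} yields $p\ge\Theta(2^N/\sqrt{N})$.

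There is no substantial obstacle beyond bookkeeping: this statement is a wrapper around the already-established machinery. The two points requiring genuine care are the passage from the dependent spanning set to an honest basis (so that Lemma \ref{lemma:linear_indep} and hence Corollary \ref{cor:lower_bound_p} apply verbatim), and the direction of the final inequality—$p$ is only an \emph{upper} bound for $\rank_{\calF_K(\Omega)}(f_{\btheta})$, so one must check that a \emph{lower} bound on the TPF rank indeed transfers to a lower bound on $p$, which it does. The key conceptual payoff is that the bound of Corollary \ref{cor:lower_bound_p} is independent of $K$, so the conclusion holds uniformly over all network parameters and every choice of activation function, regardless of the resulting dimension $K$ of $\tilde\calF_{N,m,1}(\Omega)$.
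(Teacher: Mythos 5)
Your proof is correct and follows essentially the same route as the paper's: place the TNN in the finite-dimensional space $\tilde\calF_{N,m,1}(\Omega)$ and invoke Corollary \ref{cor:lower_bound_p}. The paper's own proof is terser, leaving implicit the bookkeeping you spell out (extracting an honest linearly independent basis from the dependent spanning set, verifying $K\ge N$, and noting that the number of terms $p$ upper-bounds the TPF rank so that the lower bound transfers to $p$); your version is a careful elaboration of the same argument.
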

\begin{proof}
	From the discussions above, the TNN $f_{\btheta}$ with any fixed set of network parameters always belongs to the finite-dimensional function space $\tilde{\calF}_{N,m,1}(\Omega)$. By Corollary \ref{cor:lower_bound_p}, for any fixed network parameters, we have
		$$p\ge \Theta(2^N/\sqrt{N}).$$
		Since varying the parameters only alters the basis functions without changing the intrinsic finite dimensionality, this lower bound holds uniformly for all network parameters.
\end{proof}

\par Notice that the above result is applicable under any fixed network architectures. The only modification required is to adjust the finite-dimensional space. 

\begin{cor}
	\label{cor:tnn_complex}
	For an non-zero antisymmetric TNN with any fixed finite depths and widths and any activation function, for any network parameters, its TPF rank $p$ satisfies
	$$p \ge \Theta(2^N/\sqrt{N}).$$
\end{cor}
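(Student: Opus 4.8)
The plan is to mirror the proof of Corollary~\ref{cor:one_hidden_tnn}, whose only architecture-specific ingredient was the identification of a fixed finite-dimensional ambient space containing $f_{\btheta}$. The key realization is that the single-hidden-layer assumption entered only through Eq.~\eqref{eq:tnn_psi}, where the $p$ outputs $\{\psi_{ijk}\}_{i=1}^p$ of the $(j,k)$-th subnetwork are affine combinations of a common, $i$-independent family of univariate features (the hidden activations together with the constant). For arbitrary but fixed depth and width, exactly the same structural fact holds: within each one-dimensional subnetwork, all hidden layers are shared across the $p$ output channels, and only the final output layer varies with the channel index $i$. Hence the $p$ functions $\{\psi_{ijk}(\cdot;\btheta_{jk})\}_{i=1}^p$ are all affine images of the fixed vector of last-hidden-layer activations, so they lie in a finite-dimensional space whose dimension is bounded by (the width of the last hidden layer)$+1$, independently of $p$.

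First I would make this observation precise: for fixed network parameters, let $\{\chi_{jk\ell}\}_\ell$ denote the univariate functions computed by the last hidden layer of the $(j,k)$-th subnetwork; then $\psi_{ijk}(\cdot;\btheta_{jk})\in\mathrm{span}\{\chi_{jk\ell}\}_\ell\cup\{1\}$ for every $i$. Second, following the construction of $\tilde\calF_{N,m,1}(\Omega)$ in the one-hidden-layer case, I would form the finite-dimensional space spanned by all products $\prod_{k\in\calI}\chi_{jk\ell_k}$ over subsets $\calI\subseteq\{1,\ldots,d\}$ and admissible index tuples, and call its dimension $K$. Since $\psi_{ij}(\cdot;\btheta_j)=\prod_{k=1}^d\psi_{ijk}(\cdot;\btheta_{jk})$ is a product of affine combinations of these features, each $\psi_{ij}$ lies in this space, and therefore $f_{\btheta}\in\bigotimes^N\calF_K(\Omega)$, exactly the finite-dimensional setting of Section~\ref{sec_result}. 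As in the one-hidden-layer case, the features need not be linearly independent, but the span is finite-dimensional for fixed parameters, which is all that is required.

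Finally I would invoke Corollary~\ref{cor:lower_bound_p}. Because $f_{\btheta}$ is assumed nonzero and antisymmetric, the degenerate case $K<N$ is excluded---by Proposition~\ref{prop:properties of antisym tensors}, an antisymmetric tensor in $\bigotimes^N\C^K$ with $K<N$ must vanish, which would force $f_{\btheta}\equiv0$---so $K\ge N$ and the corollary yields $p=\rank_{\calF_K(\Omega)}(f_{\btheta})\ge\Theta(2^N/\sqrt{N})$. Crucially, the bound in Corollary~\ref{cor:lower_bound_p} does not depend on $K$, so although $K$ grows with the depths and widths, the resulting lower bound is uniform over all architectures and all parameter choices.

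The step I expect to be the main obstacle is the first one: carefully verifying that, across arbitrary depth, the dimension of the ambient space remains finite and \emph{independent of the number of terms $p$}. This rests entirely on the parameter-sharing property of the subnetworks, and the whole argument would collapse if the hidden features were allowed to depend on the output channel $i$. Once this is pinned down, the remaining steps are routine adaptations of the single-hidden-layer construction and a direct appeal to Corollary~\ref{cor:lower_bound_p}.
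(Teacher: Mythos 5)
Your proposal is correct and follows exactly the route the paper intends: the paper gives no separate proof for Corollary~\ref{cor:tnn_complex}, remarking only that ``the only modification required is to adjust the finite-dimensional space,'' and your argument supplies precisely that adjustment---the shared hidden layers make each $\psi_{ijk}$ an affine image of the last-hidden-layer features, so $f_{\btheta}$ lives in a finite-dimensional $\bigotimes^N\calF_K(\Omega)$ with $K$ independent of $p$, and Corollary~\ref{cor:lower_bound_p} applies with its $K$-independent bound. Your identification of the parameter-sharing property as the crux, and your handling of the $K<N$ degenerate case via Proposition~\ref{prop:properties of antisym tensors}, are both exactly right.
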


\begin{remark}
	During the training process, the TNN transfers among different finite-dimensional function spaces. Corollary \ref{cor:tnn_complex} states that only when $p \ge \Theta(2^N/\sqrt{N})$ can it possibly reach a space that contains a nonzero antisymmetric function.
\end{remark}

\begin{remark}
	According to our theoretical results, for a three-electron system (i.e., $N=3$), the TPF rank of the electronic wave function in finite-dimensional spaces is at least $\binom{3}{1}=3$. On the other hand, the study \cite{ilten2016product} demonstrates that the CP rank of the third-order determinant tensor is exactly five. These indicate that a TPF rank of $p=5$ is sufficient to represent antisymmetry in finite-dimensional spaces. Notably, this does not contradict the experimental results in \cite{liao2024solvingschrodingerequationusing}, where a rank of $p=50$ was employed in TNNs. Our results establish a theoretical condition required to ensure the existence of nonzero antisymmetric solutions in finite-dimensional spaces, whereas the TNNs in \cite{liao2024solvingschrodingerequationusing} are specific members within the associated spaces. 
\end{remark}


\section{Conclusion and Discussion}
\label{sec_discussion}

\par Recent works reveal that the TPF approximations (specifically, the TNNs) can incur extraordinarily high computational costs to achieve desirable accuracy on quantum many-body problems, even for systems with as few as three electrons. This stands in sharp contrast to their success on other high-dimensional applications. Preliminary numerical results point to their representation complexity in representing antisymmetry, a fundamental property that the electronic wave function should always satisfy. In this work, we rigorously establish an exponential lower bound on the TPF rank for the antisymmetric TPFs in finite-dimensional spaces. In other words, the minimum number of involved terms for the TPFs in finite-dimensional spaces to ensure antisymmetry increases exponentially with the problem dimension. Notably, this finite-dimensional setting is compatible with the traditional discretization methods and also includes the TNNs (the TPFs parameterized by neural networks) as special cases. In our proof, we mainly leverage the connection between the antisymmetric TPFs and antisymmetric tensors and dig into the CP rank of the latter.

\par Our theoretical results offer new insights and perspectives for applications with antisymmetry requirements. Firstly, low-rank TPFs, while favorable due to the separability of integrals, cannot be antisymmetric in high-dimensional settings. For example, in the case of $N=20$, representing antisymmetry requires a rank of at least $1.8 \times 10^5$, making such representations impractical. However, this does not necessarily imply an unacceptably high computational cost in implementations especially when certain algebraic structures are available. In contrast, our findings partly explain why determinant-based constructions have become the standard choice in modern practice. 
Alternatively, one could resort to approximations based on other tensor formats, such as tensor train formats \cite{oseledets2011tensor}, whose capabilities to represent antisymmetry have not been studied. 
Secondly, quantitatively investigating the impact of errors in representing antisymmetry on relevant applications would be valuable. As an analogy, the principal component analysis in statistics essentially approximates a covariance matrix with a low-rank one, while retaining most of the critical information. Understanding the trade-off between computational efficiency and accuracy in antisymmetry representation could guide further developments.

    \paragraph{Acknowledgements.} The work of Yukuan Hu has received funding from the European Research Council (ERC) under the European Union’s Horizon 2020 Research and Innovation Programme (Grant Agreement EMC2 No. 810367). The work of Xin Liu was supported in part by the National Natural Science Foundation of China (12125108, 12021001, 12288201), and RGC grant JLFS/P-501/24 for the CAS AMSS–PolyU Joint Laboratory in Applied Mathematics.

\begin{appendices}
	\section{Experimental Settings}
	\label{sec_appendix}

	\par For the one-dimensional system, we consider the Hamiltonian with soft-Coulomb interaction \cite{zhou2024multilevel}:
	$$\scrH:=-\dfrac{1}{2}\sum_{i=1}^N\Delta_{r_i}-\sum_{i=1}^N\sum_{I=1}^M\dfrac{Z_I}{\sqrt{1+(r_i-R_I)^2}}+\sum_{1\le i<j \le N}\dfrac{1}{\sqrt{1+(r_i-r_j)^2}},$$
	where $\lrbrace{r_i}_{i=1}^N$ and $\lrbrace{R_I}_{I=1}^M$ represent electron and nucleus positions respectively, and $Z_I$ is the charge of the $I$-th nucleus. 
	
	\par We consider a spinless system. For the non-antisymmetric TNN wave function $f_{\btheta}$ defined in Eq. \eqref{eq:TNN_general_fun}, the loss function is chosen as in \cite{liao2024solvingschrodingerequationusing}:
	$$L^{\text{TNN}}(\btheta):=\dfrac{\inner{f_{\btheta}, \scrH f_{\btheta}}}{\inner{f_{\btheta},f_{\btheta}}} + \beta \sum_{1\le i<j\le N}\dfrac{\inner{f_{\btheta}, f_{\btheta}\circ \scrT_{ij}}}{\inner{f_{\btheta}, f_{\btheta}}},$$
	where $\inner{\cdot,\cdot}$ is the inner product in $\calL^2$, and $\beta >0$ is the penalty parameter.
	
	\par For the antisymmetrized TNN wave function, we apply the antisymmetrizer $\scrA$ defined in Eq. \eqref{eq:anti_proj} to $f_{\btheta}$, yielding $f_{\btheta}^{\text{anti}}$:
	$$f_{\btheta}^{\text{anti}}(r_1,\dots,r_N):=\dfrac{1}{N!}\sum_{\pi\in \calS_N}\sum_{i=1}^p \text{sgn}(\pi)\prod_{j=1}^N \psi_{ij}(r_{\pi(j)};\btheta_j).$$
	The corresponding loss function is:
	$$L^{\text{anti}}(\btheta):=\dfrac{\inner{f_{\btheta}^{\text{anti}}, \scrH f_{\btheta}^{\text{anti}}}}{\inner{f_{\btheta}^{\text{anti}}, f_{\btheta}^{\text{anti}}}}.$$
	
	\par Our experiments are conducted on one-dimensional Lithium ($\mathrm{Li}$) and Helium hydride ion ($\mathrm{HeH}^+$). For $\mathrm{Li}$, the nucleus is positioned at 0.0. For $\mathrm{HeH}^+$, the nuclei are located at 0.0 ($\mathrm{He}$) and 1.463 ($\mathrm{H}^+$). Since the loss functions only involve one- and two-dimensional integrals, we compute them using Gauss–Legendre quadrature. The integration interval is truncated to $\lrsquare{-10,10}$, divided uniformly into 30 subintervals, with 30 quadrature points applied in each subinterval.
	
	\par The neural network architecture and optimization settings are detailed in Table \ref{tab:parameters}, closely following \cite{liao2024solvingschrodingerequationusing}. All experiments were implemented in JAX \cite{jax2018github} on an Ubuntu 20.04.1 operating system and trained on GPUs (8× NVIDIA GeForce RTX 4090) with CUDA 12.0. Results for $p=4$ are shown in Figure \ref{fig:tnn_comparison}.
	\begin{table}[H]
		\centering
		\renewcommand{\arraystretch}{1.2} 
		\begin{tabular}{|c|c|c|}
			\hline
			\multicolumn{2}{|c|}{\textbf{Parameter}} & \textbf{Value} \\ \hline
			\multirow{4}{*}{\textbf{Neural Network}} 
			& Hidden layers & $L=2$  \\ \cline{2-3}
			& Hidden dimension & $m=20$  \\ \cline{2-3}
			& Activation function & $\sigma(x)=\tanh(x)$  \\ \cline{2-3}
			& Initialization & Default in JAX \\ \hline
			\multirow{5}{*}{\textbf{Optimization}} 
			& Optimizer & Adam  \\ \cline{2-3}
			& Initial learning rate & \(\eta_0=1.00 \times 10^{-3}\)  \\ \cline{2-3}
			& Learning rate decay & $\eta_k=0.7^{\lfloor k/3000 \rfloor}\eta_0$
			\\ \cline{2-3} 
			& Iterations & 50000 \\ \cline{2-3}
			& Penalty parameter & $\beta=200$ \\ \hline
		\end{tabular}%
		\caption{Neural network and optimization parameters.}
		\label{tab:parameters}%
	\end{table}
	
	\par The numerical experiments aim to show that the lack of antisymmetry can substantially degrade accuracy and efficiency. Such deficiency cannot be resolved easily by tuning hyperparameters. This argument is supported by additional experiments using either more complicated neural networks ($L=4$, $m=40$) or different learning rate schedules ($\eta_k = \eta_0/(1+\alpha k)$, $\alpha = 1\times 10^{-3}$) (see Figure \ref{fig:additional_Li}). The obtained results are qualitatively consistent with those in Figure \ref{fig:tnn_comparison}.
	\begin{figure}[htb]
		\centering
		\begin{minipage}[b]{0.49\textwidth}
			\centering
			\includegraphics[width=\textwidth]{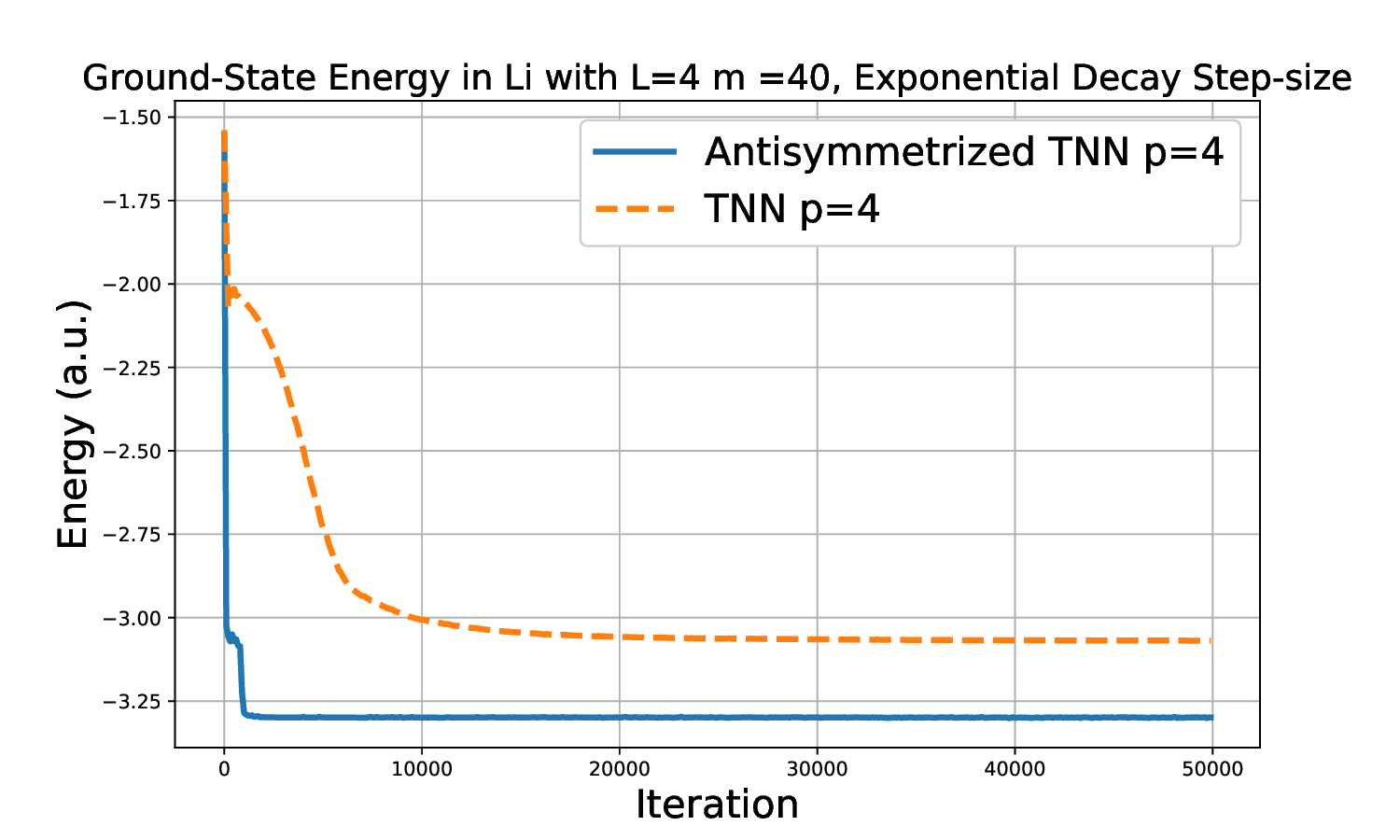}
		\end{minipage}
		\hfill
		\begin{minipage}[b]{0.49\textwidth}
			\centering
			\includegraphics[width=\textwidth]{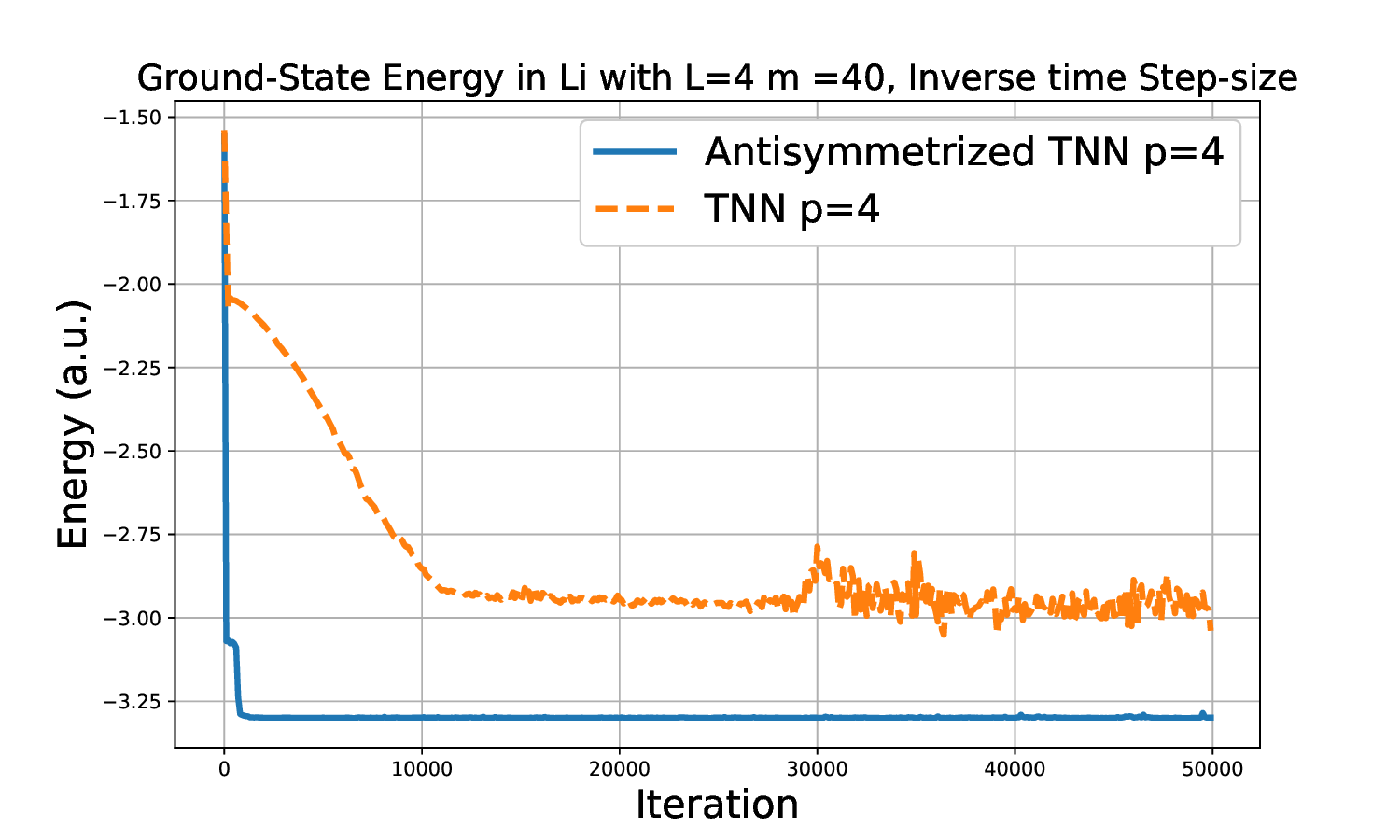}
		\end{minipage}
		\caption{Comparison of TNN approximations with and without antisymmetrization for the one-dimensional Li system using $L=4$ and $m=40$, under the exponential decay learning rate schedule (Left) and the inverse time schedule (Right).}
		\label{fig:additional_Li}
	\end{figure}

\end{appendices}

    \normalem
    \bibliographystyle{plain}
    \bibliography{ref_scm.bib}

\end{document}